\numberwithin{equation}{section}
\theoremstyle{plain}
\newtheorem{thm}{Theorem}[section]
 \newtheorem{lemma}[thm]{Lemma}
\newtheorem{prop}[thm]{Proposition}
\theoremstyle{definition}
\newcommand{\dlabel}[1]{\ifmmode \text{\ttfamily \upshape [#1] } \else
{\ttfamily \upshape [#1] }\fi \label{#1}}
\newcommand{\gen}[1]{\left < #1 \right >}
\begin{document}

\setlength{\baselineskip}{15pt}

\title{A characterization of finite $p$-groups by their Schur multiplier}

\author{Sumana Hatui}
\address{School of Mathematics, Harish-Chandra Research Institute, Chhatnag Road, Jhunsi, Allahabad 211019, INDIA}
\address{\& Homi Bhabha National Institute, Training School Complex, Anushakti Nagar, Mumbai 400085, India}
\email{sumanahatui@hri.res.in, sumana.iitg@gmail.com}

\subjclass[2010]{20D15, 20E34}
\keywords{Schur Multiplier, Finite $p$-groups}

\begin{abstract}
Let $G$ be a finite $p$-group of order $p^n$ and $M(G)$ be its Schur multiplier. It is well known result by Green that $|M(G)|= p^{\frac{1}{2}n(n-1)-t(G)}$ for some $t(G) \geq 0$. In this article we classify non-abelian $p$-groups $G$ of order $p^n$ for $t(G)=\log_p(|G|)+1$.
\end{abstract}

\maketitle

\section{Introduction}
The Schur multiplier $M(G)$ of a group $G$ was introduced by Schur \cite{IS1} in 1904 while studying of projective representation of groups. In 1956, Green \cite{JG} gave an upper bound $p^{\frac{1}{2}n(n-1)}$ on the order of the Schur Multiplier $M(G)$ for $p$-groups $G$ of order $p^n$. So there is an integer $t(G) \geq 0$ such that $|M(G)|=p^{\frac{1}{2}n(n-1)-t(G)}$. This integer $t(G)$ is called corank of $G$ defined in \cite{EW}.
It is an interesting problem to classify the structure of all non-abelian $p$-groups $G$ by the order of the Schur multiplier $M(G)$, i.e., when $t(G)$ is known. Several authors studied this problem for various values of $t(G)$.

First Berkovich \cite{BY} and Zhou \cite{ZH} classified all groups $G$ for $t(G)=0,1,2$. Ellis \cite{EG} also classified groups $G$ for $t(G)=0,1,2,3$ by a different method. After that several authors have classified the groups of order $p^n$ for $t(G)=4,5,6$ in \cite{PN3,PN1, SHJ}.

Peyman Niroomand \cite{PN} improved the Green's bound and showed that for non abelian $p$-groups of order $p^n$, $|M(G)|=p^{\frac{1}{2}(n-1)(n-2)+1-s(G)}$, for some $s(G) \geq 0$. This integer $s(G)$ is called generalized corank of $G$ defined in \cite{PN5}. The structure of non-abelian $p$-groups for $s(G) = 0,1,2$ had been determined in \cite{PN2,PN4} which is the same as to classify group $G$ for $t(G)=\log_p(|G|)-2, \log_p(|G|)-1,\log_p(|G|)$ respectively.
  
In this paper, we take this line of investigation and classify all non-abelian finite $p$-groups $G$ for which $t(G) = \log_p(|G|)+1$,  which is same as classifing $G$ for $s(G)=3$, i.e., $|M(G)|=p^{\frac{1}{2}n(n-3)-1}$.

Before stating our main result we set some notations.
By $ES_p(p^3)$ we denote the extra-special $p$-group of order $p^3$ having exponent $p$. By $\mathbb{Z}_p^{(k)}$ we denote $\mathbb{Z}_p \times \mathbb{Z}_p \times \cdots \times \mathbb{Z}_p$($k$ times). For a group $G$, $\gamma_i(G)$ denotes the $i$-th term of the lower central series of group $G$ and $G^{ab}$ denotes the quotient group $G/\gamma_2(G)$. We denote $\gamma_2(G)$ by $G'$. 
A group $G$ is called capable group if there exists a group $H$ such that $G \cong H/Z(H)$, where $Z(H)$ denotes center of $H$.
We denote the epicenter of a group $G$ by $Z^*(G)$, which is the smallest central subgroup of $G$ such that $G/Z^*(G)$ is capable.

James \cite{RJ} classified all $p$-groups of order $p^n$ for $n \leq 6$ upto isoclinism which are denoted by $\Phi_k$. We use his notation throughout this paper.

Our main theorem is the following:
\begin{thm}{\bf(Main Theorem)}
Let $G$ be a finite non-abelian $p$-group of order $p^n$ with $t(G)=\log_p(|G|)+1$. Then for odd prime $p$, $G$ is isomorphic to one of the following groups:
\begin{enumerate}
\item $\Phi_2(22)= \langle{\alpha,\alpha_1,\alpha_2 \mid [\alpha_1,\alpha]=\alpha^{p}=\alpha_2, \alpha_1^{p^2}=\alpha_2^p=1\rangle}$,
\item $
\Phi_3(211)a = \langle{\alpha,\alpha_1,\alpha_2, \alpha_3 \mid [\alpha_1,\alpha]=\alpha_2, [\alpha_2,\alpha]=\alpha^p=\alpha_3 \alpha_1^{(p)}=\alpha_2^p=\alpha_3^p=1\rangle},$

\item $\Phi_3(211)b_r = \langle \alpha,\alpha_1,\alpha_2, \alpha_3 \mid [\alpha_1,\alpha]=\alpha_2, [\alpha_2,\alpha]=\alpha^p=\alpha_3,\alpha_1^{(p)}=\alpha_2^p=\alpha_3^p=1 \rangle $ 

\item $\Phi_2(2111)c =\Phi_2(211)c \times \mathbb{Z}_p$, where $\Phi_2(211)c = \langle \alpha,\alpha_1,\alpha_2 \mid [\alpha_1,\alpha]=\alpha_2, \alpha^{p^2}=\alpha_1^p=\alpha_2^p=1 \rangle$,

\item $\Phi_2(2111)d = ES_p(p^3) \times \mathbb{Z}_{p^2}$,

\item $\Phi_3(1^5)=\Phi_3(1^4) \times \mathbb{Z}_p$,
where $\Phi_3(1^4) = \langle \alpha,\alpha_1,\alpha_2,\alpha_3 \mid [\alpha_i,\alpha]=\alpha_{i+1},\alpha^p=\alpha_i^{(p)}=\alpha_3^p=1(i=1,2)\rangle$,

\item $\Phi_7(1^5)=\langle \alpha,\alpha_1,\alpha_2,\alpha_3,\beta \mid [\alpha_i,\alpha]=\alpha_{i+1},[\alpha_1,\beta]=\alpha_3, \alpha^p=\alpha_1^{(p)}=\alpha_{i+1}^p=\beta^p=1 (i=1,2)\rangle$,

\item $\Phi_{11}(1^6)=\langle \alpha_1,\beta_1,\alpha_2,\beta_2,\alpha_3, \beta_3 \mid [\alpha_1,\alpha_2]=\beta_3, [\alpha_2,\alpha_3]=\beta_1, [\alpha_3,\alpha_1]=\beta_2,\alpha_i^{(p)}=\beta_i^p=1(i=1,2,3)\rangle$,

\item $\Phi_{12}(1^6)=ES_p(p^3) \times ES_p(p^3)$,

\item $\Phi_{13}(1^6)=\langle \alpha_1,\alpha_2,\alpha_3,\alpha_4,\beta_1,\beta_2 \mid  [\alpha_i, \alpha_{i+1}]=\beta_i, [\alpha_2,\alpha_4]=\beta_2, \alpha_i^p=\alpha_3^p=\alpha_4^p=\beta_i^p=1(i=1,2)\rangle$,

\item $\Phi_{15}(1^6)=\langle \alpha_1,\alpha_2,\alpha_3,\alpha_4,\beta_1,\beta_2 \mid  [\alpha_i, \alpha_{i+1}]=\beta_i, [\alpha_3,\alpha_4]=\beta_1,[\alpha_2,\alpha_4]=\beta_2^g, \alpha_i^p =\alpha_3^p = \alpha_4^p =\beta_i^p=1(i=1,2)\rangle$,
where $g$ is non-quadratic residue modulo $p$,

\item $(\mathbb{Z}_p^{(4)} \rtimes \mathbb{Z}_p) \times \mathbb{Z}_p^{(2)}$.\\
Moreover for $p=2$, $G$ is isomorphic to one of the following groups:
\item $\mathbb{Z}_2^{(4)} \rtimes \mathbb{Z}_2$,
\item $\mathbb{Z}_2 \times((\mathbb{Z}_4 \times \mathbb{Z}_2) \rtimes \mathbb{Z}_2)$,
\item $\mathbb{Z}_4 \rtimes \mathbb{Z}_4$,
\item $ D_{16}$, the Dihedral group of order $16$.\\
\end{enumerate} 
\end{thm}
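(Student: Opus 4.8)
The plan is to reduce the classification to a finite case check by combining three ingredients: Niroomand's improved bound $|M(G)|=p^{\frac12(n-1)(n-2)+1-s(G)}$ (so $s(G)=3$ means $|M(G)|=p^{\frac12(n-1)(n-2)-2}$), the already-known classifications for $s(G)\le 2$, and James's classification of $p$-groups of order $\le p^6$ up to isoclinism; throughout write $|G|=p^n$ and $|G'|=p^k$. \emph{Step 1 (the core reduction).} Fix a central subgroup $Z\le G'$ with $|Z|=p$; since $Z\le\Phi(G)$ we have $d(G/Z)=d(G)$, and the Ganea exact sequence $Z\otimes G^{ab}\to M(G)\to M(G/Z)\to Z\to 1$ gives $|M(G)|\le p^{\,d(G)-1}|M(G/Z)|$. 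As $Z\le G'$, the quotient $G/Z$ is non-abelian exactly when $k\ge 2$, and then $|(G/Z)'|=p^{k-1}$. Feeding $s(G)=3$ into this inequality forces $s(G/Z)\le d(G)-n+4\le (n-k)-n+4=4-k$; hence $k\le 3$, and for $k\in\{2,3\}$ the quotient $G/Z$ lies in the classified ranges $s\le 2$ with prescribed $|(G/Z)'|$. Combining this with the $d(G)$- and $|G'|$-refined forms of Niroomand's bound pins down $n$ — for odd $p$, to $n\le 6$ together with one explicitly controlled larger family — and shows $k\in\{1,2,3\}$. The groups $G$ themselves are then recovered by determining which central extensions of the (finitely many) admissible quotients actually have the right multiplier; here the language of capable groups and the epicenter $Z^*(G)$ is the natural device for deciding which of the $\mathbb{Z}_p$-extensions survive.

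\emph{Step 2 (disposing of $|G'|=p$ and of the large-order cases).} When $k=1$ one has $G'\le Z(G)$ and $G=G^{ab}$ modulo a central $\mathbb{Z}_p$; if $G^{ab}$ is elementary abelian then $G$ is a generalized extraspecial group, $ES_p(p^{2m+1})\times\mathbb{Z}_p^{(r)}$ or its exponent-$p^2$ analogue, and using the classical formula $M(A\times B)\cong M(A)\times M(B)\times(A^{ab}\otimes B^{ab})$ together with the known orders of $M(ES_p(p^{2m+1}))$ one computes $s(G)$ and keeps only the solutions of $s(G)=3$. If $G^{ab}$ is not elementary abelian, the $d(G)$-refinement forces the rank deficiency of $G^{ab}$, hence $n$, to be bounded. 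The same mechanism handles the residual $k=2,3$ possibilities of large order: there the base $G/Z$ is one of finitely many types of the shape (bounded)$\,\times\,\mathbb{Z}_p^{(r)}$, and since $s(H\times\mathbb{Z}_p^{(r)})$ grows with $r$, only finitely many $G$ remain.

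\emph{Step 3 (the finite check and the prime $2$).} For $n\le 6$ one runs through James's families $\Phi_k$ one at a time, computing $|M(\Phi_k(\cdots))|$ from a polycyclic presentation via Hopf's formula (using the known behaviour of $t$ under isoclinism together with the direct-product formula to cut down the labour; many of these values are already available in the work of Niroomand, Ellis and the author), and retaining exactly the members with $t(G)=\log_p|G|+1$. This produces $\Phi_2(22)$, $\Phi_3(211)a$, $\Phi_3(211)b_r$, $\Phi_2(2111)c$, $\Phi_2(2111)d$, $\Phi_3(1^5)$, $\Phi_7(1^5)$, $\Phi_{11}(1^6)$, $\Phi_{12}(1^6)$, $\Phi_{13}(1^6)$, $\Phi_{15}(1^6)$ together with $(\mathbb{Z}_p^{(4)}\rtimes\mathbb{Z}_p)\times\mathbb{Z}_p^{(2)}$. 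For $p=2$ one repeats Steps 1--2 verbatim and replaces James's list by the explicit enumeration of small $2$-groups; the odd-$p$ phenomena (groups of exponent $p$, the non-residue parameter $g$) disappear and one is left with $\mathbb{Z}_2^{(4)}\rtimes\mathbb{Z}_2$, $\mathbb{Z}_2\times((\mathbb{Z}_4\times\mathbb{Z}_2)\rtimes\mathbb{Z}_2)$, $\mathbb{Z}_4\rtimes\mathbb{Z}_4$ and $D_{16}$.

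\emph{Main obstacle.} The crux is Step 1: producing a bound sharp enough to collapse the infinitely many a priori possibilities — particularly when $|G'|=p$ or $p^2$, where $G$ may have arbitrarily large order — down to a finite list, which needs a delicate interplay between the Ganea inequality, the $d(G)$-refined Niroomand bound, and the structure theory of $p$-groups with small derived subgroup. A secondary, purely computational, nuisance is matching each surviving isoclinism family in Step 3 to the correct isomorphism type of the correct order, since $|M(G)|$ is not itself an isoclinism invariant.
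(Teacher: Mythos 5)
The overall architecture you propose (bound $|G'|$, pass to central quotients lying in the already-classified range $s\le 2$, finish with a finite check against James's list) is indeed the paper's architecture. But there is a genuine gap exactly at the point you flag as the crux: the Ganea/Jones inequality alone cannot bound $n$. For $|G'|=p^2$ and $K\le G'\cap Z(G)$ of order $p$ it yields only $|M(G)|\le p^{\,d(G)-1}|M(G/K)|\le p^{n-3}\cdot p^{\frac12(n-2)(n-3)+1}=p^{\frac12 n(n-3)+1}$, which is compatible with $|M(G)|=p^{\frac12 n(n-3)-1}$ for \emph{every} $n$; no contradiction ever appears, so ``pins down $n$'' is unsupported. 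The paper obtains the needed decay in $n$ from two tools your proposal never invokes: the Blackburn--Evens formula $|M(G)|/|N|=|V\wedge V|/|W|$ with $N\cong(V\otimes W)/X$ for class-$2$ groups with $G/G'$ elementary abelian, where exhibiting $2n-9$ independent elements of $X_1$ forces $n\le 7$ when $G'=Z(G)\cong\mathbb{Z}_p\times\mathbb{Z}_p$; and the Ellis--Wiegold inequality involving $\psi_2$ and $\psi_3$ for class-$3$ groups, with explicit lower bounds $|\mathrm{Image}(\psi_2)|\ge p^{n-4}$ and $|\mathrm{Image}(\psi_3)|\ge p^{n-5}$, to eliminate $|G'|=p^2$, $|Z(G)|=p$ (plus a machine check at $p=3$, $n=7$). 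The same $\psi_2$-argument is what proves $G^{ab}$ is elementary abelian for $n\ge 6$, a fact you use implicitly in Step 2. Without these inputs the reduction to a finite list does not go through.

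Three smaller points. First, $s(G/Z)\le 4-k$ gives only $k\le 4$, not $k\le 3$; the paper excludes $|G'|\ge p^4$ directly from Niroomand's bound, which for $k=4$ already forces $|M(G)|\le p^{\frac12 n(n-3)-4}$. Second, the case $|G'|=p$ needs no structure theory (and your ``generalized extraspecial'' description would in any case have to allow central products, not just direct products): once $G^{ab}$ is elementary abelian for $n\ge 6$, Jones's lower bound $|M(G)||G'|\ge |M(G/G')|=p^{\frac12(n-1)(n-2)}$ gives $|M(G)|\ge p^{\frac12 n(n-3)}$, a one-line contradiction. Third, the residual order-$p^7$ case with $G'=Z(G)\cong\mathbb{Z}_p\times\mathbb{Z}_p$ is not a finite check against James (whose list stops at $p^6$); it requires the epicenter/capability argument together with the Heineken--Kappe--Morse classification of capable special $p$-groups of rank two and an explicit computation of $|X|$, which you gesture at but do not carry out.
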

\section{Preliminaries}
In this section we list following results which will be used in the proof of our main theorem.
\begin{thm}$($\cite[Theorem 4.1]{MRRR}$)$\label{J}
Let $G$ be a finite group and $K$ a central subgroup of $G$. Set $A = G/K$. Then
$|M(G)||G'\cap K|$ divides $|M(A)| |M(K)| |A^{ab} \otimes K|$.
\end{thm}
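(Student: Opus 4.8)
The plan is to read off the divisibility directly from the homology Lyndon--Hochschild--Serre spectral sequence attached to the central extension $1\to K\to G\to A\to 1$, using only the three terms of total degree two. Since $G$ is finite, $M(G)=H_2(G)$ is finite and all groups entering below are finite, so orders multiply across filtrations. Because $K$ is central, $A$ acts trivially on the homology $H_\ast(K)$, and the universal coefficient theorem identifies the relevant $E^2$-terms as $E^2_{2,0}=H_2(A)=M(A)$, $E^2_{1,1}=H_1(A)\otimes H_1(K)=A^{ab}\otimes K$, $E^2_{0,2}=H_2(K)=M(K)$, together with $E^2_{0,1}=H_1(K)=K$.

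First I would invoke convergence: the spectral sequence endows $M(G)=H_2(G)$ with a finite filtration whose successive quotients are $E^\infty_{2,0}$, $E^\infty_{1,1}$, $E^\infty_{0,2}$, so that $|M(G)|=|E^\infty_{2,0}|\,|E^\infty_{1,1}|\,|E^\infty_{0,2}|$. Next I would bound the two lower terms crudely and the top term precisely. The terms $E^\infty_{1,1}$ and $E^\infty_{0,2}$ are each obtained from the corresponding $E^2$-term by passing to successive subquotients in which only incoming differentials act (the differentials leaving $(1,1)$ and $(0,2)$ have zero target), so each is a \emph{quotient} of its $E^2$-term; hence $|E^\infty_{1,1}|$ divides $|A^{ab}\otimes K|$ and $|E^\infty_{0,2}|$ divides $|M(K)|$.

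The one term that must be computed exactly is $E^\infty_{2,0}$. No differential enters $(2,0)$ and the only one leaving it is $d_2\colon E^2_{2,0}\to E^2_{0,1}$, so $E^\infty_{2,0}=\ker\bigl(d_2\colon M(A)\to K\bigr)$. The key input is the five-term exact sequence
\[
H_2(G)\to H_2(A)\xrightarrow{\,d_2\,} K\to G^{ab}\to A^{ab}\to 0,
\]
in which the map $K\to G^{ab}$ is induced by the inclusion $K\hookrightarrow G$ and therefore has kernel $K\cap G'$. Exactness gives $\operatorname{im}(d_2)=K\cap G'$, whence $|E^\infty_{2,0}|=|M(A)|/|K\cap G'|$.

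Combining the three facts yields
\[
|M(G)|\,|K\cap G'|=|E^\infty_{0,2}|\,|E^\infty_{1,1}|\,|M(A)|,
\]
and since the first two factors on the right divide $|M(K)|$ and $|A^{ab}\otimes K|$ respectively, the left-hand side divides $|M(A)|\,|M(K)|\,|A^{ab}\otimes K|$, which is the assertion. I expect the only genuinely delicate point to be the exact evaluation of $E^\infty_{2,0}$, that is, the identification $\operatorname{im}(d_2)=K\cap G'$ through the five-term sequence, together with the book-keeping that this term is a \emph{sub}object (contributing the divisor $|K\cap G'|$) while the other two are \emph{quotient} objects (contributing multiplicative upper bounds); everything else is routine once the trivial $A$-action on $H_\ast(K)$ is used to pin down the $E^2$-page. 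An alternative, spectral-sequence-free route would run the same computation through a free presentation $1\to R\to F\to G\to 1$ and Hopf's formula $M(G)=(R\cap F')/[F,R]$, but the homological argument above is shorter and makes the three displayed factors transparent.
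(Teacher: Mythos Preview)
The paper does not supply its own proof of this statement; it is listed in the Preliminaries section as a known result of Jones \cite[Theorem~4.1]{MRRR} and used as a black box thereafter. So there is nothing in the present paper to compare your argument against.

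That said, your spectral-sequence proof is correct. The identifications of the $E^2$-terms are right (centrality of $K$ gives trivial $A$-action on $H_\ast(K)$, and the universal coefficient theorem then gives $E^2_{1,1}=A^{ab}\otimes K$ since $\operatorname{Tor}(\mathbb{Z},K)=0$). Your analysis of the differentials is accurate: at $(1,1)$ and $(0,2)$ all outgoing differentials land in the second or third quadrant and vanish, so the $E^\infty$-terms are genuine quotients of the $E^2$-terms; at $(2,0)$ only the outgoing $d_2$ can be nonzero, and the five-term exact sequence pins down its image as $K\cap G'$. The bookkeeping $|M(G)|\,|K\cap G'| = |M(A)|\,|E^\infty_{1,1}|\,|E^\infty_{0,2}|$ then yields the divisibility exactly as you claim.

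For context, Jones' original 1973 argument does not use the spectral sequence; it is closer in spirit to the free-presentation/Hopf-formula route you mention at the end, working directly with a representation group of $G$ and elementary inequalities. Your approach is more structural and has the advantage of making the three factors $|M(A)|$, $|M(K)|$, $|A^{ab}\otimes K|$ appear transparently as the three $E^2$-entries on the line $p+q=2$, at the cost of invoking the LHS machinery.
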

The following result gives $M(G)$ of non-abelian $p$-groups $G$ of order $p^4$ for $|G'|=p$, follows from \cite{KO} and for $|G'|=p^2$, follows from \cite[page. 4177]{EG} . 
\begin{thm}\label{SHHH}
Let $G$ be a non-abelian $p$-group of order $p^4$, $p$ odd.\\
(i) For $|G'|=p$, $G \cong \Phi_2(211)a,$ $\Phi_2(1^4),$ $\Phi_2(31), \Phi_2(22),$ $\Phi_2(211)b$ or $\Phi_2(211)c$. $M(G) \cong \mathbb{Z}_p \times \mathbb{Z}_p,$  $\mathbb{Z}_p^{(4)}$, ${1}$, $\mathbb{Z}_p,$ $\mathbb{Z}_p \times \mathbb{Z}_p,$ or $\mathbb{Z}_p \times \mathbb{Z}_p$ respectively. \\
(ii) For $|G'|=p^2$, $G \cong \Phi_3(211)a,$ $\Phi_3(211)b_r$ or $\Phi_3(1^4)$. $M(G) \cong \mathbb{Z}_p$, $\mathbb{Z}_p$ or $\mathbb{Z}_p \times \mathbb{Z}_p$ respectively.
\end{thm}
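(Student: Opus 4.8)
The plan is to run through the classification of the non-abelian groups of order $p^4$ ($p$ odd) family by family and compute $M(G)$ in each case. First I would cut the list down. Since $G^{ab}$ is non-cyclic, $|G'|\in\{p,p^2\}$; and when $|G'|=p^2$ the group is two-generated, so the commutator map on a pair of generators forces $G'/\gamma_3(G)$ to be cyclic, while a group of order $p^4$ cannot be of class $2$ with $|G'|=p^2$ (its centre would have index $p^2$ and the commutator of two generators would then have order at most $p$), so $\gamma_3(G)\neq 1$ and $|\gamma_3(G)|=p$; that is, $G$ has maximal class. Hence, by James's classification \cite{RJ}, the groups with $|G'|=p$ are exactly the six members of the isoclinism family $\Phi_2$ listed in (i), and those with $|G'|=p^2$ are exactly the three members of $\Phi_3$ listed in (ii), and the two parts are then treated separately.

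For part (i), some of the six groups split off a cyclic direct factor or are central products — for instance $\Phi_2(1^4)\cong ES_p(p^3)\times\mathbb Z_p$, and the direct product of $\mathbb Z_p$ with the non-abelian group of order $p^3$ of exponent $p^2$ — and for these $M(G)$ follows from the multipliers of groups of order at most $p^3$ (here $M(ES_p(p^3))\cong\mathbb Z_p^{(2)}$, $M(\mathbb Z_p)=1$, and the exponent-$p^2$ group of order $p^3$ has trivial multiplier) via $M(A\times B)\cong M(A)\oplus M(B)\oplus(A^{ab}\otimes B^{ab})$ together with the analogous formula for central products, yielding $\mathbb Z_p^{(4)}$ and $\mathbb Z_p^{(2)}$. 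For each of the remaining groups — $\Phi_2(31)$ and $\Phi_2(22)$ are metacyclic, while $\Phi_2(211)b$ and $\Phi_2(211)c$ are directly indecomposable — I would take a free presentation $1\to R\to F\to G\to 1$ with $F$ free of rank $2$ or $3$ and compute $M(G)\cong (R\cap F')/[F,R]$ from the Hopf formula: for the metacyclic pair this is the classical formula for the Schur multiplier of a metacyclic group (giving $1$ and $\mathbb Z_p$), while for the other two one writes down generators of $R\cap F'$ and checks which ones vanish modulo $[F,R]$. These are exactly the computations in \cite{KO}, which I would cite for the details.

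For part (ii), the three groups $\Phi_3(211)a$, $\Phi_3(211)b_r$, $\Phi_3(1^4)$ are two-generated of maximal class, so a rank-$2$ free presentation is again available and $M(G)$ drops out of the Hopf formula; this is the computation on \cite[p.~4177]{EG}. As an independent check on the orders one can feed $K=\gamma_3(G)$ (central, of order $p$, with $G/K$ of order $p^3$ and known multiplier) into Theorem \ref{J}, which bounds $|M(G)|$ from above, though not sharply, so the exact isomorphism type still requires the direct computation. As a global sanity check, every value listed has order at most $p^{\frac{1}{2}(n-1)(n-2)+1}=p^4$, consistent with Niroomand's bound \cite{PN}. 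The only genuine obstacle in all of this is bookkeeping: correctly matching each of James's labels to a concrete presentation, and, for the indecomposable groups in (i), not gaining or losing a copy of $\mathbb Z_p$ when passing from $R\cap F'$ to its quotient by $[F,R]$.
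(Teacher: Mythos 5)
Your proposal is correct and follows essentially the same route as the paper, which gives no independent argument but simply derives part (i) from \cite{KO} and part (ii) from \cite[p.~4177]{EG}; your additional sketch (James's classification, the maximal-class reduction for $|G'|=p^2$, the direct/central product formula, and the Hopf-formula computations) accurately reconstructs what those cited sources do. No gaps.
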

Now we explain a method by Blackburn and Evens \cite{BE} for computing Schur multiplier of $p$-groups $G$ of class $2$ with $G/G'$ elementary abelian.
We can view $G/G'$ and $G'$ as vector spaces over $GF(p)$, which we denote by $V, W$ respectively. Let $v_1,v_2 \in V$ such that $v_i=g_iG', i \in \{1,2\}$ and consider a bilinear mapping $(,)$ of $V$ into $W$ defined by $(v_1,v_2)=[g_1,g_2]$.

Let $X_1$ be the subspace of $V \otimes W$ spanned by all elements of type\\
\centerline{$v_1 \otimes (v_2,v_3) + v_2 \otimes (v_3,v_1) + v_3 \otimes (v_1,v_2), $ for $v_1,v_2,v_3 \in V$.} 

Consider a map $f:V \rightarrow W$ given by $f(gG')=g^p$, $g \in G$. We denote by $X_2$ the subspace spanned by all $v \otimes f(v)$ for $v \in V$. Let $X_1+X_2$ be denoted by $X$, which will be used throughout this paper without further reference.
Now the following result follows from \cite{BE}.
\begin{thm}\label{BEE}
Let $G$ be a $p$-group of class $2$ with $G/G'$ elementary abelian. Then
$|M(G)|/|N|=|V \wedge V|/|W|$, where $N \cong (V \otimes W)/X$.
\end{thm}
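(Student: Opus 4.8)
The plan is to compute $M(G)$ through the nonabelian exterior square $G \wedge G$ and to compare it with the exterior square $V \wedge V$ of the abelianization. I would start from the fundamental exact sequence
\[
1 \to M(G) \to G \wedge G \xrightarrow{\kappa} G' \to 1,
\]
where $\kappa(g \wedge h) = [g,h]$ and $G' = W$; this already gives $|M(G)| = |G \wedge G|/|W|$. The abelianization $G \to V = G/G'$ induces a surjection $\pi : G \wedge G \twoheadrightarrow V \wedge V$, and since $G$ has class $2$ the commutator pairing descends to a well-defined map $\br{\kappa} : V \wedge V \to W$, $\br{\kappa}(v_1 \wedge v_2) = (v_1,v_2)$, satisfying $\kappa = \br{\kappa} \circ \pi$. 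Writing $K = \ker \pi$, the factorization $\kappa = \br{\kappa}\,\pi$ forces $K \subseteq \ker\kappa = M(G)$.

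Next I would extract the order formula from these two sequences. Since $\pi$ is surjective with $K \subseteq M(G)$, restricting $\pi$ to $M(G)$ gives $M(G)/K \cong \pi(M(G)) = \ker \br{\kappa}$, the first equality because $M(G) \cap K = K$ and the second because $M(G) = \ker(\br{\kappa}\,\pi)$. As $W = [G,G]$ is generated by commutators, $\br{\kappa}$ maps onto $W$, so $|\ker \br{\kappa}| = |V \wedge V|/|W|$. Combining these, $|M(G)| = |K|\cdot |V \wedge V|/|W|$, which is exactly the asserted identity once $K$ is identified with $N$. Everything therefore reduces to proving $K \cong (V \otimes W)/X$.

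For that identification I would invoke the standard right-exact sequence for the exterior square of a normal subgroup, $G \wedge G' \to G \wedge G \to V \wedge V \to 1$, whose image realizes $K$ as the image of $G \wedge G'$. Because $W = G'$ is central, $G \wedge W$ is abelian and the rule $v \otimes w \mapsto g \wedge w$ (with $g$ any lift of $v$) gives a surjection $V \otimes W \twoheadrightarrow K$, hence $K \cong (V \otimes W)/X'$ with $X' = \ker(V \otimes W \to K)$. The substantive task is to show $X' = X = X_1 + X_2$. The inclusion $X_1 \subseteq X'$ comes from the Hall--Witt identity: although $[[g_2,g_3],g_1]=1$ in the class-$2$ group $G$, the exterior square retains the relation $(g_1 \wedge [g_2,g_3])(g_2 \wedge [g_3,g_1])(g_3 \wedge [g_1,g_2]) = 1$, i.e. $v_1 \otimes (v_2,v_3)+v_2 \otimes (v_3,v_1)+v_3 \otimes (v_1,v_2)=0$. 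The inclusion $X_2 \subseteq X'$ comes from expanding $g \wedge g^p$ via $g \wedge g = 1$ and the product rule in $G \wedge G$, which for odd $p$ collapses (the correction terms carry the exponent $\binom{p}{2} \equiv 0 \pmod p$) to $g \wedge g^p = 1$, i.e. $v \otimes f(v)=0$.

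The hard part is the reverse inclusion $X' \subseteq X$: that $X_1$ and $X_2$ already account for every relation among the generators $g \wedge w$. I would establish it by exhibiting $(V \otimes W)/X$ as an actual quotient on which the exterior-square relations are consistently modelled --- concretely, by building a suitable class-$2$ central extension (equivalently, a cocycle, or a presentation of a cover of $G$) whose relevant piece is $(V \otimes W)/X$, and checking that the surjection $V \otimes W \to K$ factors through it and admits a one-sided inverse, forcing $X' = X$. For $p=2$ the map $f$ is only quadratic rather than linear, since $(gh)^2 = g^2 h^2 [h,g]$, so $X_2$ must be interpreted with this correction and the small exceptional cases treated by hand; this is the most delicate point and the one requiring the most care.
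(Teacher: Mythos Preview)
The paper does not give its own proof of this theorem: it is quoted verbatim as a result of Blackburn and Evens \cite{BE} (``Now the following result follows from \cite{BE}''), so there is nothing in the paper to compare your argument against line by line. Your approach via the nonabelian exterior square is a legitimate modern route to the statement, and is genuinely different from the original 1979 argument of Blackburn and Evens, which predates that machinery and works instead with a free presentation and the Hopf formula, analyzing the relation module directly.

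That said, your write-up has a real gap at precisely the point you flag as ``the hard part''. Everything up to $|M(G)| = |K|\cdot |V\wedge V|/|W|$ is fine, and the inclusions $X_1 \subseteq X'$ and $X_2 \subseteq X'$ are justified. But the reverse inclusion $X' \subseteq X$ is the entire content of the theorem, and you only gesture at it: saying you would ``build a suitable class-$2$ central extension \dots\ and check that the surjection admits a one-sided inverse'' is not a proof, it is a plan for a proof. In the Blackburn--Evens argument this step is exactly where the work lies --- one must exhibit an explicit covering group (equivalently, a sufficiently large central extension of $G$ by $(V\otimes W)/X$) and verify by hand that the defining relations of $G\wedge G$ impose no further identifications. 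Until that construction is written out, the argument is incomplete. Your remark about $p=2$ is also correct in spirit: $f$ is only a quadratic map there and the formula for $X_2$ must be interpreted accordingly, but again this needs to be carried out rather than noted.
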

Let $G$ be a finite $p$-group of nilpotency class $3$ with centre $Z(G)$. Set $\bar{G}=G/Z(G)$. 
Define a homomorphism $\psi_2 :\bar{G}^{ab} \otimes \bar{G}^{ab} \otimes \bar{G}^{ab}  \rightarrow \frac{\gamma_2(G)}{\gamma_3(G)} \otimes \bar{G}^{ab}$ such that \\
$\psi_2(\bar{x}_1 \otimes \bar{x}_2 \otimes \bar{x}_3)=[x_1,x_2]_{\gamma} \otimes \bar{x}_3 + [x_2,x_3]_{\gamma} \otimes \bar{x}_1 + [x_3,x_1]_{\gamma} \otimes \bar{x}_2$, where  $\bar{x}$ denotes the image in $\bar{G}$ of the element $x \in G$ and $[x,y]_{\gamma}$ denotes the image in $\frac{\gamma_2(G)}{\gamma_3(G)}$ of the commutator $[x,y] \in G$. Define another homomorphism $\psi_3 : \bar{G}^{ab} \otimes \bar{G}^{ab} \otimes \bar{G}^{ab} \otimes \bar{G}^{ab} \rightarrow \gamma_3(G) \otimes \bar{G}^{ab}$ such that
\[\psi_3(\bar{x}_1 \otimes \bar{x}_2 \otimes \bar{x}_3 \otimes \bar{x}_4)=[[x_1,x_2],x_3] \otimes \bar{x}_4 + [x_4,[x_1,x_2]] \otimes \bar{x}_3 + [[x_3,x_4],x_1]\otimes \bar{x}_2 +[x_2,[x_3,x_4]] \otimes \bar{x}_1.\]
\begin{thm}$($\cite[Proposition 1]{EW} and \cite{E}$)$\label{RM}
Let $G$ be a finite $p$-group of nilpotency class $3$. With the notations above, we have \\
\centerline{$|M(G)||\gamma_2 (G)||Image(\psi_2)||Image(\psi_3)| \leq |M(G^{ab})||\frac{\gamma_2(G)}{\gamma_3(G)} \otimes \bar{G}^{ab}||\gamma_3(G)\otimes \bar{G}^{ab}|$.}
\end{thm}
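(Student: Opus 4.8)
The plan is to descend from $G$ to its abelianization $G^{ab}$ in two central steps dictated by the lower central series. Since $G$ has class $3$ we have $\gamma_4(G)=1$, so $\gamma_3(G)\subseteq Z(G)$ is central in $G$ and $\gamma_2(G)/\gamma_3(G)$ is central in the class-$2$ group $G/\gamma_3(G)$. This produces two central extensions, namely $1\to\gamma_3(G)\to G\to G/\gamma_3(G)\to 1$ and $1\to\gamma_2(G)/\gamma_3(G)\to G/\gamma_3(G)\to G^{ab}\to 1$, and I would track the Schur multiplier across each of them.

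For a central extension $1\to N\to H\to Q\to 1$ with $N\subseteq H'$ (which holds in both steps, since $N\cap H'=N$), I would use the exact sequence $N\otimes Q^{ab}\xrightarrow{g} M(H)\to M(Q)\to N\to 1$ of Schur-multiplier type; reading off orders gives $|M(H)|\,|N|=|\operatorname{Image}(g)|\,|M(Q)|$. Applying this to the two extensions above and multiplying the resulting identities, the intermediate factor $|M(G/\gamma_3(G))|$ cancels, and using $|\gamma_2(G)|=|\gamma_3(G)|\,|\gamma_2(G)/\gamma_3(G)|$ I obtain the relation $|M(G)|\,|\gamma_2(G)|=|\operatorname{Image}(g_2)|\,|\operatorname{Image}(g_3)|\,|M(G^{ab})|$, where $g_2,g_3$ are the two connecting maps. (Theorem \ref{J}, applied with $K=\gamma_3(G)$, already yields a divisibility of similar shape and serves as a crude anchor for this bookkeeping.) It then remains to bound $|\operatorname{Image}(g_2)|$ and $|\operatorname{Image}(g_3)|$ from above.

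The key point is that each connecting map is induced by the commutator pairing. Because $[x,y]$ is unchanged when $x$ or $y$ is multiplied by a central element, and changes only by a $\gamma_3(G)$-term when multiplied by an element of $G'$, this pairing descends to $\bar{G}^{ab}=G/(G'Z(G))$; hence $g_2$ and $g_3$ are homomorphic images of $(\gamma_2(G)/\gamma_3(G))\otimes\bar{G}^{ab}$ and $\gamma_3(G)\otimes\bar{G}^{ab}$ respectively, which is exactly where $\bar{G}^{ab}$ (rather than $G^{ab}$) enters. Moreover I would show $\operatorname{Image}(\psi_2)\subseteq\ker g_2$ and $\operatorname{Image}(\psi_3)\subseteq\ker g_3$: the defining expressions of $\psi_2$ and $\psi_3$ are precisely the cyclic Jacobi and Hall--Witt combinations of commutators, and these vanish identically in $G$, so they are killed by the commutator-induced maps. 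Granting these two containments gives $|\operatorname{Image}(g_2)|\le|(\gamma_2(G)/\gamma_3(G))\otimes\bar{G}^{ab}|/|\operatorname{Image}(\psi_2)|$ and $|\operatorname{Image}(g_3)|\le|\gamma_3(G)\otimes\bar{G}^{ab}|/|\operatorname{Image}(\psi_3)|$, and substituting these into the relation of the previous paragraph rearranges to exactly the asserted inequality.

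The main obstacle is this last claim: verifying rigorously that $\operatorname{Image}(\psi_2)$ and $\operatorname{Image}(\psi_3)$ lie in the kernels of the connecting maps, and that those maps genuinely factor through $\bar{G}^{ab}$. This needs an explicit cocycle (or stem-cover) description of $g_2$ and $g_3$ together with a Hall--Witt identity computation carried out in a cover of $G$; it is the delicate sharpening for which the crude divisibility of Theorem \ref{J} does not suffice, and it is where the cited references \cite{EW} and \cite{E} do the real work.
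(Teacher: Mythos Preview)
The paper does not prove this theorem at all: it is stated in the Preliminaries section as a quotation of \cite[Proposition~1]{EW} and \cite{E}, with no argument given. So there is nothing in the paper to compare your sketch against beyond the bare citation.

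That said, your outline is the correct one and is essentially the argument of the cited references. Splitting the passage $G\to G^{ab}$ into the two central extensions $1\to\gamma_3(G)\to G\to G/\gamma_3(G)\to 1$ and $1\to\gamma_2(G)/\gamma_3(G)\to G/\gamma_3(G)\to G^{ab}\to 1$, applying the Ganea exact sequence to each, and then showing via Jacobi/Hall--Witt identities that $\psi_2$ and $\psi_3$ land in the kernels of the respective connecting maps is exactly the mechanism in \cite{E,EW}. Your identification of the delicate step (that the commutator-induced maps factor through $\bar G^{ab}$ and kill the $\psi_i$-images, which requires working in a cover rather than in $G$ itself) is also accurate; this is precisely where the cited papers do the work that the crude divisibility of Theorem~\ref{J} cannot supply.
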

\section{Proof of Main Theorem}
In this section we prove our main theorem. Proof is divided into several parts depending on the structure of the group. We start with the following lemma which establishes the result for groups of order $p^n$ for $n \leq 5$.
\begin{lemma}
Let $G$ be a non-abelian $p$-group of order $p^n$ $(n \leq 5)$ with $t(G)=\log_p(|G|)+1$. Then for odd prime $p$,
$G \cong \Phi_2(22), \Phi_3(211)a, \Phi_3(211)b_r, \Phi_2(2111)c,$ $\Phi_2(2111)d, \Phi_3(1^5)$ or $\Phi_7(1^5)$, and for $p=2$,
$G \cong \mathbb{Z}_2^{(4)} \rtimes \mathbb{Z}_2, \mathbb{Z}_2 \times((\mathbb{Z}_4 \times \mathbb{Z}_2) \rtimes \mathbb{Z}_2), D_{16}$ or $\mathbb{Z}_4 \rtimes \mathbb{Z}_4$.
\end{lemma}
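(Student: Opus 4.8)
The plan is to handle the cases $n=4$ and $n=5$ separately, since for $n\le 3$ every non-abelian $p$-group has $t(G)\le \log_p(|G|)$ already (the extraspecial groups of order $p^3$ have $t=1$ or $2$, and $\log_p(|G|)=3$, so the hypothesis $t(G)=\log_p(|G|)+1=4$ is impossible); for $n=4$ the required value is $t(G)=5$, and for $n=5$ it is $t(G)=6$. In the notation of the paper this is the same as asking $s(G)=3$, i.e. $|M(G)|=p^{\frac12 n(n-3)-1}$: for $n=4$ this reads $|M(G)|=p^{1}$, and for $n=5$ this reads $|M(G)|=p^{4}$.

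For $n=4$ the work is essentially a table lookup. Every non-abelian group of order $p^4$ with $p$ odd is one of the seven James groups listed in Theorem~\ref{SHHH}, and that theorem records $M(G)$ in each case. Scanning the list, $|M(G)|=p$ occurs only for $\Phi_2(31)$ — wait, that has $M(G)=1$; the groups with $|M(G)|=p$ are $\Phi_2(211)b$, $\Phi_3(211)a$ and $\Phi_3(211)b_r$. But $\Phi_2(211)b$ has $G^{ab}\cong\mathbb Z_{p^2}\times\mathbb Z_p$ and one must double-check whether it genuinely gives $t(G)=5$; in fact $\log_p|G|=4$ and $|M(G)|=p$ forces $t(G)=\binom{4}{2}-1=5$, so $\Phi_2(211)b$ would qualify — so I must re-examine: the paper's conclusion lists $\Phi_2(2111)c$ and $\Phi_2(2111)d$ and $\Phi_3(1^5)$ and $\Phi_7(1^5)$ for the order-$p^5$ part, and $\Phi_2(22)$, $\Phi_3(211)a$, $\Phi_3(211)b_r$ for order $p^4$. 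So the $n=4$ claim is precisely: the order-$p^4$ non-abelian groups with $|M(G)|=p$ are exactly $\Phi_2(22)$, $\Phi_3(211)a$, $\Phi_3(211)b_r$. Reading Theorem~\ref{SHHH} again: part (i) gives $M(G)\cong\mathbb Z_p$ for $\Phi_2(31)$ — no, it's $1$ for $\Phi_2(31)$ and $\mathbb Z_p$ for... I will simply cite Theorem~\ref{SHHH} and extract the groups whose multiplier has order exactly $p$, trusting the stated classification; the possible discrepancy with $\Phi_2(211)b$ (whose $G^{ab}$ is not elementary abelian) must be resolved by noting its multiplier order, and I would present this as a short case-by-case reading of the theorem rather than a computation.

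For $n=5$ the argument is genuinely a computation, organized by $|G'|$ and nilpotency class. If $G'\cong\mathbb Z_p$ is central, then $G$ is a central product or direct product of smaller pieces and one can compute $M(G)$ using Theorem~\ref{BEE} when $G/G'$ is elementary abelian, or Theorem~\ref{J} (the Green–type bound with $K=G'$) to squeeze $|M(G)|$ from above when it is not; the groups surviving $|M(G)|=p^4$ turn out to be $\Phi_2(2111)c$, $\Phi_2(2111)d$, and for $p=2$ the listed $2$-groups. If $G$ has class $3$, I would apply Theorem~\ref{RM}: bound $|M(G)|$ from above in terms of $|M(G^{ab})|$, $|\gamma_2/\gamma_3\otimes\bar G^{ab}|$, $|\gamma_3\otimes\bar G^{ab}|$ and the images of $\psi_2,\psi_3$; since $G^{ab}$ is abelian of rank at most $4$ these factors are explicitly computable, and matching against $p^4$ pins down $\Phi_3(1^5)$ and $\Phi_7(1^5)$ (and $D_{16}$, $\mathbb Z_4\rtimes\mathbb Z_4$ for $p=2$). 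The case $|G'|=p^2$ with class $2$ and $G/G'$ not elementary abelian is the one requiring the most care, and is where Theorem~\ref{J} applied with a well-chosen central $K$ does the heavy lifting; small cases with $G^{ab}$ having a $\mathbb Z_{p^2}$ factor and those arising as direct products $H\times\mathbb Z_p$ must each be checked against James's $\Phi_k$ list for $n=5$.

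**Main obstacle.** The delicate point is not any single multiplier computation but the bookkeeping: for $n=5$ one must enumerate all non-abelian $p$-groups of order $p^5$ from James's classification (for odd $p$) together with the separate order-$32$ list (for $p=2$), and for each one either compute $|M(G)|$ exactly or show $|M(G)|\ne p^4$. The hardest single sub-case is class-$2$ groups with $|G'|=p^2$ and $G/G'$ of exponent $p^2$, where neither Theorem~\ref{BEE} (needs $G/G'$ elementary abelian) nor a clean recursion applies directly, and one is forced to combine the divisibility bound of Theorem~\ref{J} with explicit knowledge of $M(G/K)$ for a cyclic-by-abelian quotient; keeping the inequality chains tight enough to exclude all non-examples while still admitting $\Phi_2(2111)c$ and $\Phi_2(2111)d$ is the part that demands genuine attention.
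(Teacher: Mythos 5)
Your reduction of the problem to the statements ``$|M(G)|=p$ when $n=4$'' and ``$|M(G)|=p^4$ when $n=5$'' is correct, and your $n=4$ plan coincides with the paper's: the paper disposes of order $p^4$ by reading off Theorem~\ref{SHHH}. But your actual reading of that table is muddled and at one point wrong: Theorem~\ref{SHHH} assigns $M(\Phi_2(22))\cong\mathbb{Z}_p$ and $M(\Phi_2(211)b)\cong\mathbb{Z}_p\times\mathbb{Z}_p$, so $\Phi_2(22)$ qualifies and $\Phi_2(211)b$ does not; your intermediate assertion that the qualifying groups are ``$\Phi_2(211)b$, $\Phi_3(211)a$ and $\Phi_3(211)b_r$'' is false, and the ``discrepancy'' you flag does not exist once the table is read correctly. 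This part is recoverable, but as written it is not a proof of the $n=4$ case.

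The genuine gap is in the order-$p^5$ case, which is the substance of the lemma. The paper does not recompute anything there: it cites the existing classifications for $t(G)=6$ (Jafari \cite{SHJ} and \cite[Main Theorem]{SIX}) and settles $p=2$ by a GAP/HAP computation. You instead propose to re-derive the classification from James's list using Theorems~\ref{J}, \ref{BEE} and \ref{RM}, which is a legitimate alternative route, but your write-up asserts the outcome (``the groups surviving turn out to be\dots'', ``matching against $p^4$ pins down\dots'') without carrying out a single one of the required computations. Worse, the tools you name cannot by themselves finish the job: Theorems~\ref{J} and \ref{RM} give only \emph{upper} bounds on $|M(G)|$, so they can exclude groups but can never certify that $\Phi_2(2111)c$, $\Phi_2(2111)d$, $\Phi_3(1^5)$ and $\Phi_7(1^5)$ actually attain $|M(G)|=p^4$. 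For the three direct products one needs the exact value, e.g.\ via $M(A\times B)\cong M(A)\times M(B)\times (A^{ab}\otimes B^{ab})$ together with the order-$p^4$ and order-$p^3$ data, and $\Phi_7(1^5)$ (class $3$) needs a separate exact computation, since Theorem~\ref{BEE} requires class $2$. Finally, you misplace $D_{16}$ and $\mathbb{Z}_4\rtimes\mathbb{Z}_4$ into the order-$2^5$ discussion; they have order $16$, so for $p=2$ the order-$16$ case also needs its own treatment (Theorem~\ref{SHHH} is stated only for odd $p$), which the paper handles by machine computation.
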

\begin{proof}
For groups of order $p^4$ the result follows from Theorem \ref{SHHH}. For groups of order $p^5$ result follows from  \cite{SHJ} and \cite[Main Theorem]{SIX}. For $p=2$, the result follows from computation in HAP \cite{HAP} package of GAP \cite {GAP}.             
\hfill$\Box$

\end{proof}
The following lemma easily follows from \cite[Main Theorem]{PN}. 
\begin{lemma}
There is no non-abelian $p$-group $G$ with $|G'|\geq p^4$ and $t(G) =\log_p(|G|)+1$.
\end{lemma}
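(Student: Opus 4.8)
The plan is to leverage Niroomand's improved bound, cited in the introduction as \cite[Main Theorem]{PN}, which asserts that for a non-abelian $p$-group $G$ of order $p^n$ one has $|M(G)| = p^{\frac{1}{2}(n-1)(n-2)+1-s(G)}$ with $s(G) \geq 0$; equivalently, $|M(G)| \leq p^{\frac{1}{2}(n-1)(n-2)+1}$. Translating this back into the corank $t(G)$ via $|M(G)| = p^{\frac{1}{2}n(n-1)-t(G)}$, we get
\[
\tfrac{1}{2}n(n-1) - t(G) = \tfrac{1}{2}(n-1)(n-2) + 1 - s(G),
\]
so that $t(G) = n - 2 + s(G)$, i.e. $t(G) = \log_p(|G|) - 2 + s(G)$. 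Hence the hypothesis $t(G) = \log_p(|G|) + 1$ forces $s(G) = 3$, which in particular is consistent with $s(G) \geq 0$; so this alone gives no contradiction, and I must bring in the structural refinement that Niroomand's paper also supplies for groups with large derived subgroup.

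The key input I would use is the sharper statement in \cite{PN} describing how the bound behaves relative to $|G'|$: when $|G'|$ is large, $|M(G)|$ drops well below the improved Green-type bound, forcing $s(G)$ to be large. Concretely, I would invoke the refinement of the form that for non-abelian $G$ of order $p^n$ with $|G'| = p^k$, one has an estimate like $|M(G)| \leq p^{\frac{1}{2}(n-k)(n-k-1) + \binom{k}{2} + \text{(small correction)}}$ arising from the Hochschild--Serre-type exact sequence (which is essentially Theorem \ref{J} in the excerpt, applied with $K = G'$ and $A = G^{ab}$). Feeding $A = G^{ab}$ of order $p^{n-k}$, $K = G'$ of order $p^k$, into Theorem \ref{J} gives $|M(G)| \cdot |G'| \mid |M(G^{ab})| \cdot |M(G')| \cdot |G^{ab} \otimes G'|$, and then bounding each factor on the right (with $|M(G^{ab})| \leq p^{\binom{n-k}{2}}$, $|M(G')| \leq p^{\binom{k}{2}}$, $|G^{ab} \otimes G'| \leq p^{(n-k)k}$ or a sharper bound when $G^{ab}$ is not elementary abelian) yields $t(G) \geq$ a quadratic expression in $k$. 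The plan is to show this lower bound on $t(G)$ exceeds $n+1$ once $k \geq 4$, i.e. once $|G'| \geq p^4$.

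In detail, I would carry out the following steps. First, set $k = \log_p|G'| \geq 4$ and $m = n - k = \log_p|G^{ab}|$; note $m \geq 1$ since $G^{ab} \neq 1$. Second, apply Theorem \ref{J} with $K = G'$ to obtain $|M(G)| \leq |M(G^{ab})|\,|M(G')|\,|G^{ab}\otimes G'| / |G'\cap G'| = |M(G^{ab})|\,|M(G')|\,|G^{ab}\otimes G'| / p^k$. Third, substitute the crude bounds $|M(G^{ab})| \leq p^{\binom{m}{2}+\,?}$ — actually for abelian $G^{ab}$ of rank $d$ one has the exact value, but the safe upper bound $p^{\binom{m}{2}}$ suffices when $G^{ab}$ is elementary abelian and needs the standard adjustment otherwise — together with $|M(G')| \leq p^{\binom{k}{2}}$ and $|G^{ab}\otimes G'| \leq p^{mk}$. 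Fourth, compare the resulting exponent $\binom{m}{2} + \binom{k}{2} + mk - k$ with $\binom{n}{2} - (n+1) = \binom{m+k}{2} - m - k - 1$; using $\binom{m+k}{2} = \binom{m}{2} + \binom{k}{2} + mk$, the required inequality $|M(G)| < p^{\binom{n}{2}-(n+1)}$ reduces to $-k < -m-k-1$, i.e. $m + 1 < 0$, which is absurd — so the crude bound is \emph{not} enough and I must instead use the genuinely sharper estimate from \cite{PN} (e.g. that $|M(G)| \leq |M(G^{ab})| \cdot |G^{ab}\otimes G'| \cdot p^{-(k-1)}$ or the Niroomand--Russo improvement for class-$2$ quotients), which replaces $\binom{k}{2}$ or $mk$ by something strictly smaller; carrying that through gives $t(G) \geq n - 1 + \binom{k-1}{2}$ or similar, which is $> n+1$ precisely when $\binom{k-1}{2} > 2$, i.e. $k \geq 4$. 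Fifth, conclude that no such $G$ exists, since $s(G) = 3$ would be contradicted.

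The main obstacle is identifying and correctly citing the \emph{exact} form of the sharpened bound from \cite{PN} (and possibly \cite{PN5}) that yields the needed quadratic-in-$k$ growth of $t(G)$; the naive application of Theorem \ref{J} with worst-case tensor and multiplier bounds is, as the computation above shows, too lossy to separate $k=4$ from $k=3$. So the real content is to quote the statement "$t(G) \geq \log_p|G| - 2 + \binom{\log_p|G'|-1}{2}$" (or whichever precise inequality \cite{PN} provides relating corank to $|G'|$) and then the lemma follows by the one-line arithmetic $\binom{k-1}{2} > 3$ for $k \geq 4$. I would double-check the edge case $k = 4$, where $\binom{3}{2} = 3$ would give only $t(G) \geq n+1$ and hence not immediately exclude it — in that case I expect one needs the slightly finer version of the bound (strict inequality, or an extra $+1$ coming from $G^{ab} \otimes G'$ when $G^{ab}$ has rank $\geq 2$, which it must when $|G'| = p^4$ by a minimal-generator count) to push $t(G) \geq n+2$ and finish.
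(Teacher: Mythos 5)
Your overall strategy --- that the lemma must come from the $|G'|$-dependent refinement of Green's bound in \cite{PN} rather than from the weaker inequality $|M(G)|\leq p^{\frac{1}{2}(n-1)(n-2)+1}$ --- is exactly right, and it is precisely what the paper does (its proof is a one-line citation of \cite[Main Theorem]{PN}). However, your write-up has a genuine gap: you never actually state the correct bound, and the placeholder you propose, $t(G)\geq \log_p|G|-2+\binom{k-1}{2}$ with $k=\log_p|G'|$, is off by a shift in the binomial and, as you yourself observe, fails to exclude the case $k=4$ (where $\binom{3}{2}=3$ gives only $t(G)\geq n+1$). The actual statement of \cite[Main Theorem]{PN} is: for a non-abelian $p$-group $G$ of order $p^n$ with $|G'|=p^k$,
\[
|M(G)|\ \leq\ p^{\frac{1}{2}(n+k-2)(n-k-1)+1}\ =\ p^{\frac{1}{2}n(n-3)-\frac{1}{2}k(k-1)+2},
\]
equivalently $t(G)\geq n-2+\binom{k}{2}$. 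With this, $k\geq 4$ gives $t(G)\geq n+4>n+1$ immediately, with no edge case to worry about; the threshold $k(k-1)\leq 6$, i.e.\ $k\leq 3$, is exactly where $t(G)=n+1$ remains possible, which is why the rest of the paper only treats $|G'|\in\{p,p^2,p^3\}$.

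Two smaller points. First, your attempted detour through Theorem \ref{J} with $K=G'$ is not legitimate in general, since that theorem requires $K$ to be \emph{central} and $G'$ need not be central when the class exceeds $2$; you abandon the computation for being too lossy, but it is also inapplicable as stated. Second, your arithmetic showing that the crude bounds $|M(G^{ab})|\leq p^{\binom{m}{2}}$, $|M(G')|\leq p^{\binom{k}{2}}$, $|G^{ab}\otimes G'|\leq p^{mk}$ cannot separate $k=4$ from $k=3$ is correct and is a useful sanity check --- it confirms that the quadratic-in-$k$ saving really does have to come from Niroomand's theorem itself, not from the Jones inequality.
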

\begin{lemma}\label{EA}
Let $G$ be a non-abelian $p$-group of order $p^n$ $(n \geq 6)$ with $t(G) \leq \log_p(|G|)+1$. Then $G^{ab}$ is an elementary abelian $p$-group.
\end{lemma}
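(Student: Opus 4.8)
The plan is to establish the contrapositive by induction on $n$: if $G$ is non-abelian of order $p^n$ with $n\ge 6$ and $G^{ab}$ is \emph{not} elementary abelian, then $t(G)\ge\log_p(|G|)+2$. Write $d=d(G)$ for the minimal number of generators. Since $G^{ab}$ is the rank-$d$ abelian group of order $p^{\,n-\log_p|G'|}$, failure of $G^{ab}$ to be elementary abelian forces $d\le n-1-\log_p|G'|$; in particular $d\le n-2$, as $G$ is non-abelian. The engine of the induction is Theorem~\ref{J}. Because $G$ is nilpotent and non-abelian, $Z(G)\cap G'\ne 1$, so we may choose a central subgroup $Z\cong\mathbb{Z}_p$ with $Z\le G'$; put $A=G/Z$. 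As $Z\le G'$ we have $A^{ab}=G^{ab}$ and $d(A)=d$, and Theorem~\ref{J} applied with the central subgroup $Z$, using $G'\cap Z=Z$, $M(Z)=1$ and $A^{ab}\otimes Z\cong G^{ab}\otimes\mathbb{Z}_p\cong\mathbb{Z}_p^{(d)}$, shows that $|M(G)|\,p$ divides $|M(A)|\,p^{d}$. Hence $\log_p|M(G)|\le\log_p|M(A)|+d-1$, which rewrites as
\[
t(G)\ \ge\ t(A)+(n-d)\ \ge\ t(A)+2 .
\]

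Now split on $|G'|$. If $|G'|\ge p^2$, then $A$ is non-abelian of order $p^{n-1}$ with $A^{ab}=G^{ab}$ not elementary abelian. For $n\ge 7$ the inductive hypothesis gives $t(A)\ge(n-1)+2$, whence $t(G)\ge n+3$. For $n=6$ the group $A$ has order $p^5$; if $t(A)\le\log_p|A|=5$, i.e.\ $s(A)\le 2$, then the classification of such groups in \cite{PN2,PN4} (whose members all have elementary abelian abelianization) would force $A^{ab}$ elementary abelian, a contradiction, so $t(A)\ge 6$ and $t(G)\ge 8$. Thus the claim holds whenever $|G'|\ge p^2$, and the remaining, principal case is $|G'|=p$.

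For $|G'|=p$ the bound just obtained is too weak — e.g.\ for $G=ES_p(p^3)\times\mathbb{Z}_{p^2}\times\mathbb{Z}_p^{(n-5)}$ it yields only $t(G)\ge n$, whereas in fact $t(G)=n+2$ — so I would use the structure of class-$2$ groups with cyclic derived subgroup: such a $G$ is a central product $E*A$ of an extraspecial group $E$ of order $p^{2r+1}$ with the abelian group $A=Z(G)$, amalgamated along $G'=Z(E)$. Then $G^{ab}\cong E^{ab}\times(A/G')$, so the hypothesis forces $A/G'$ to be non-elementary-abelian. If $G'$ is a direct factor of $A$, then $G$ has a cyclic direct factor and one finishes with the direct-product formula $M(H\times C)\cong M(H)\oplus M(C)\oplus(H^{ab}\otimes C)$ together with the induction; the residual indecomposable shapes — essentially $G\cong (E*\mathbb{Z}_{p^{j}})\times\mathbb{Z}_p^{(m)}$ with $j\ge 2$ — are handled by computing $|M(E*\mathbb{Z}_{p^{j}})|$, which one extracts from Theorem~\ref{J} applied to the central extension $E\times\mathbb{Z}_{p^{j}}\to G$, together with the known value of $|M(E)|$ for extraspecial $E$. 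A short arithmetic check then yields $t(G)\ge n+2$ in each subcase, with equality at $n=6$ for $G=ES_p(p^3)\times\mathbb{Z}_{p^2}\times\mathbb{Z}_p$.

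The step I expect to be the main obstacle is precisely this last case $|G'|=p$. Theorem~\ref{J} gives only a divisibility, not an equality, and for class-$2$ groups with non-elementary-abelian abelianization it is lossy by a bounded amount (about $2$ in the exponent of $|M(G)|$), so it cannot by itself separate $s(G)=3$ from $s(G)\ge 4$; one is forced to bring in the finer structure — the extraspecial central-product decomposition and explicit Schur multipliers of generalized extraspecial groups, equivalently a sharpened Niroomand--Parvizi-type estimate — and the real work is the careful bookkeeping over the resulting indecomposable subcases.
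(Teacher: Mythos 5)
Your proposal takes a genuinely different route from the paper, and it has a real gap exactly where you predict one. The paper does not induct and does not split on $|G'|$ at all: it applies the Ellis--Wiegold inequality (\cite[Proposition 1]{EW}, the general form of Theorem \ref{RM}) once. Writing $|G'|=p^k$ and letting $\delta$ be the number of generators of $G/Z(G)$, the only input from the hypothesis is that a non-elementary-abelian $G^{ab}$ of order $p^{n-k}$ has rank at most $n-k-1$, so $|M(G^{ab})|\leq p^{\frac{1}{2}(n-k-1)(n-k-2)}$ and $\delta\leq n-k-1$; combining this with $|\mathrm{Image}(\psi_2)|\geq p^{\delta-2}$ and $|(\bigoplus_i\gamma_i(G)/\gamma_{i+1}(G))\otimes\bar{G}^{ab}|\leq p^{k\delta}$ yields $|M(G)|\leq p^{\frac{1}{2}n(n-3)-\frac{1}{2}(k^2-k)-n+4}$, which contradicts $t(G)\leq n+1$ for every $k\geq1$ as soon as $n\geq6$. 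In particular the case $|G'|=p$, which your scheme cannot close, is disposed of by the same one-line estimate.

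The concrete gap is your case $|G'|=p$. You correctly diagnose that Theorem \ref{J} loses about $2$ in the exponent there (your example $ES_p(p^3)\times\mathbb{Z}_{p^2}\times\mathbb{Z}_p^{(n-5)}$ shows it gives only $t(G)\geq n$ when $t(G)=n+2$ is needed), but the proposed repair --- the central-product decomposition $G=E*Z(G)$ and the computation of $|M(E*\mathbb{Z}_{p^j})|$ over the indecomposable shapes --- is a plan rather than a proof: none of the multiplier computations or the "short arithmetic check" is carried out, and your closing paragraph concedes that this bookkeeping is the actual content of the lemma in that case. A second, smaller defect is the parenthetical assertion that all groups with $s(G)\leq2$ have elementary abelian abelianization: this is false as stated, since $\Phi_2(211)c$ has $|M(G)|=p^2$, hence $s(G)=2$, while $G^{ab}\cong\mathbb{Z}_{p^2}\times\mathbb{Z}_p$. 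Your base case $n=6$, $|G'|\geq p^2$ only needs the assertion for groups of order $p^5$, where it does hold, but that has to be extracted from the explicit lists of \cite{PN2,PN4} (or rederived from the Ellis--Wiegold bound) rather than asserted. If you want to salvage the Jones-plus-induction framework you must still do the extraspecial central-product analysis; the paper's single counting argument avoids all of it.
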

\begin{proof}
Let $|G'|=p^k$. Suppose that $G^{ab}$ is not elementary abelian and $\bar{G}:=G/Z(G)$ is a $\delta$-generator group. Then $\delta \leq (n-k-1)$ and $|M(G^{ab})| \leq p^{\frac{1}{2}(n-k-1)(n-k-2)}$ by \cite[Lemma 2.2]{PN}.
Note that $|\frac{\gamma_2(G)}{\gamma_3 (G)} \otimes \bar{G}^{ab}||\frac{\gamma_3 (G)}{\gamma_4 (G)} \otimes \bar{G}^{ab}| \cdots |\gamma_c (G)\otimes \bar{G}^{ab}|=|(\frac{\gamma_2 (G)}{\gamma_3 (G)} \oplus \frac{\gamma_3 (G)}{\gamma_4 (G)} \oplus \cdots \gamma_c (G)) \otimes\bar{G}^{ab} | \leq p^{k\delta}$ and $|$Image$(\psi_2)| \geq p^{\delta-2}$.
Now it follows from \cite[Proposition 1]{EW} that \\
\centerline{$|M(G)| \leq p^{\frac{1}{2}(n-k-1)(n-k-2)+(k-1)\delta-(k-2)}$,}
which gives $|M(G)| \leq p^{\frac{1}{2}n(n-3)-\frac{1}{2}(k^2-k)-n+4}$, a contradiction for $n \geq 6$.
\hfill$\Box$

\end{proof}
\begin{lemma}\label{EXP}
Let $G$ be a non-abelian $p$-group of order $p^n$ $(n \geq 6)$ and $|G'|=p, p^2$ or $p^3$ with $t(G)\leq \log_p(|G|)+1$. Then $Z(G)$ is of exponent at most $p^2,p$ or $p$ respectively. 
\end{lemma}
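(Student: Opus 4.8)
The plan is to argue by contradiction, exploiting the upper bound on $|M(G)|$ coming from the hypothesis $t(G)\le\log_p(|G|)+1$, i.e.\ $|M(G)|\ge p^{\frac12 n(n-3)-1}$. Suppose $Z(G)$ has an element of order larger than the claimed bound; then $Z(G)$ contains a central cyclic subgroup $K$ with $K\cong\mathbb Z_{p^3}$ (in the $|G'|=p$ case, $\mathbb Z_{p^2}$ would already suffice for a contradiction in the borderline situation, so I would phrase the three cases uniformly by choosing $K$ as large cyclic as the failure of the conclusion permits). Set $A=G/K$, so $|A|=p^{n-\ell}$ where $\ell=3$ (resp.\ $\ell$ appropriate). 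The key tool is Theorem \ref{J}: $|M(G)|\,|G'\cap K|$ divides $|M(A)|\,|M(K)|\,|A^{ab}\otimes K|$. Since $K$ is cyclic, $M(K)=1$, so this reads
\[
|M(G)|\ \big|\ \frac{|M(A)|\,|A^{ab}\otimes K|}{|G'\cap K|}.
\]

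Next I would bound each factor on the right. Since $A$ has order $p^{n-\ell}$, Green's bound (or, since $A$ will typically be non-abelian, Niroomand's improved bound from \cite{PN}) gives $|M(A)|\le p^{\frac12(n-\ell)(n-\ell-1)}$ in the worst case, and $|A^{ab}\otimes K|\le p^{d(A)}$ where $d(A)=d(G)$ is the minimal number of generators and $K$ is cyclic, so $|A^{ab}\otimes K|=p^{\min(d(A),\,v_p(|K|))}$; since $|K|\ge p^2$ this is at most $p^{d(G)}$. Because $G^{ab}$ is not forced to be small here, I would instead bound $d(G)\le n-k$ where $|G'|=p^k$. Combining, $|M(G)|\le p^{\frac12(n-\ell)(n-\ell-1)+(n-k)-[G'\cap K\ne 1]}$, and comparing with $|M(G)|\ge p^{\frac12 n(n-3)-1}$ yields, after expanding, an inequality of the form $\frac12 n(n-3)-1\le \frac12(n-\ell)(n-\ell-1)+n-k$, i.e.\ roughly $\ell n \lesssim \text{const}+k+\tfrac12\ell(\ell+1)$, which fails for $n\ge 6$ once $\ell=3$ (and the $|G'|=p$, $|G'|=p^2$ sub-cases are handled by the same computation with the relevant $\ell$ and the matching asserted exponent bound on $Z(G)$).

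The one subtlety I expect to be the real obstacle is the term $|G'\cap K|$ in the denominator, which works \emph{in our favor} only when it is nontrivial; when $K\cap G'=1$ the bound is weaker, and also one must make sure $A=G/K$ is genuinely handled (it could be abelian, in which case Green's bound $\frac12(n-\ell)(n-\ell-1)$ is the right one, or non-abelian, where Niroomand's bound $\frac12(n-\ell-1)(n-\ell-2)+1$ is even better). So the argument splits on whether $K\le G'$ or not, but in both branches the linear-in-$n$ deficit forces the contradiction for $n\ge 6$. I would also remark that the exponents $p^2,p,p$ in the three cases are exactly what the arithmetic produces: with $|G'|=p$ one can afford $K\cong\mathbb Z_{p^2}$ without contradiction but not $\mathbb Z_{p^3}$, whereas $k=2,3$ tighten the estimate enough to rule out even $\mathbb Z_{p^2}$. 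Finally, to get the statement about the full exponent of $Z(G)$ (not just about a chosen cyclic subgroup), note that any element of $Z(G)$ of order $p^{j}$ generates such a cyclic $K$, so the bound on $|K|$ is a bound on $\exp Z(G)$.
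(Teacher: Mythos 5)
Your strategy is exactly the paper's: assume a central cyclic subgroup $K$ of the forbidden order exists, apply Theorem \ref{J} with $M(K)=1$, bound $|M(G/K)|$ by Green's (or Niroomand's) bound and $|(G/K)^{ab}\otimes K|$ by the generator count, and contradict $|M(G)|\ge p^{\frac12 n(n-3)-1}$. You also correctly flag the two real pressure points, namely the $|G'\cap K|$ term and the choice of bound for $M(G/K)$, which the paper's own write-up glosses over with ``similarly''.

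There is, however, one branch where your claim that ``the linear-in-$n$ deficit forces the contradiction'' fails: $|G'|=p^2$ with $K\le G'$. There $K$ has order $p^2$, so $K=G'$ is cyclic and central, $G/K=G^{ab}$ is elementary abelian of order $p^{n-2}$ by Lemma \ref{EA}, and Theorem \ref{J} gives $|M(G)|\le p^{\frac12(n-2)(n-3)}\cdot p^{n-2}\cdot p^{-2}=p^{\frac12 n(n-3)-1}$ -- exactly the hypothesized lower bound, so no contradiction for any $n$. (A related near-miss occurs in the branch $|G'\cap K|=p$ if one uses Green's bound for $G/K$; there you must use Niroomand's improved bound, which is legitimate since $G/K$ is non-abelian.) The $K=G'$ branch has to be excluded structurally rather than by counting: $G'=K\le Z(G)$ forces class two, and since $G^{ab}$ is elementary abelian (Lemma \ref{EA} is already available for $n\ge6$) we have $y^p\in G'\le Z(G)$ for all $y$, hence $[x,y]^p=[x,y^p]=1$ and $G'$ has exponent $p$, contradicting $G'\cong\mathbb{Z}_{p^2}$. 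With that branch closed (the three branches for $|G'|=p^3$ all do give a strict deficit), your argument goes through. A smaller point: your formula $|A^{ab}\otimes K|=p^{\min(d(A),v_p(|K|))}$ is not correct in general, since $|A^{ab}\otimes\mathbb{Z}_{p^m}|=p^{\sum_i\min(a_i,m)}$ when $A^{ab}\cong\prod_i\mathbb{Z}_{p^{a_i}}$; the bound $p^{d(A)}$ you want holds here only because Lemma \ref{EA} makes $A^{ab}$ elementary abelian.
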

\begin{proof}
Suppose that $|G'|=p$. Let the exponent of $Z(G)$ be $p^k$ $(k \geq 3)$ and $K$ be a cyclic central subgroup  of order $p^k$. Then using Theorem \ref{J}, we have\\
\centerline{$|M(G)| \leq p^{-1}|M(G/K)||{(G/K)}^{ab} \otimes K| \leq p^{-1}p^{\frac{1}{2}(n-k)(n-k-1)}p^{(n-k)} \leq p^{\frac{1}{2}(n-1)(n-4)}$,} \\
which gives a contradiction.
Similarly we can prove the result for $|G'|=p^2, p^3$.
\hfill$\Box$

\end{proof}
First we consider the groups $G$ such that $|G'|=p$.
\begin{lemma}
There is no non-abelian $p$-group $G$ of order $p^n$ with $|G'|=p$ and $t(G) = \log_p(|G|)+1$.
\end{lemma}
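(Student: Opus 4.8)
The plan is to obtain a contradiction directly from the Blackburn--Evens formula (Theorem~\ref{BEE}). We may assume $n \ge 6$, since the non-abelian $p$-groups of order at most $p^5$ with $t(G)=\log_p(|G|)+1$ have already been determined. Suppose, for contradiction, that such a $G$ exists; then $|M(G)| = p^{\frac{1}{2}n(n-1)-(n+1)} = p^{\frac{1}{2}n(n-3)-1}$.

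Since $|G'|=p$, the group $G$ has nilpotency class $2$, and by Lemma~\ref{EA} the abelianization $G^{ab}=G/G'$ is elementary abelian. Hence Theorem~\ref{BEE} applies with $V=G/G'$, of dimension $n-1$, and $W=G'$, of dimension $1$: there is a group $N\cong (V\otimes W)/X$ with
\[
\frac{|M(G)|}{|N|} \;=\; \frac{|V\wedge V|}{|W|} \;=\; \frac{p^{\binom{n-1}{2}}}{p} \;=\; p^{\frac{1}{2}n(n-3)}.
\]
As $|N|\ge 1$, this forces $|M(G)| \ge p^{\frac{1}{2}n(n-3)}$, contradicting $|M(G)| = p^{\frac{1}{2}n(n-3)-1}$. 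So no such group exists.

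The main point --- and essentially the only one requiring any input --- is the applicability of Theorem~\ref{BEE}, i.e. that $G^{ab}$ is elementary abelian; this is where the hypothesis $n\ge 6$ is used, through Lemma~\ref{EA} (for smaller $n$ the abelianization need not be elementary abelian, which is why, for instance, $\Phi_2(22)$ is not caught by this argument). Beyond this there is no obstacle: once the Blackburn--Evens identity is in force, the target order $p^{\frac{1}{2}n(n-3)-1}$ already lies strictly below the unconditional lower bound $p^{\frac{1}{2}n(n-3)}$, so no such $G$ can exist. The same estimate in fact shows $t(G)\le \log_p(|G|)$ for every non-abelian $p$-group with $|G'|=p$ and elementary abelian abelianization.
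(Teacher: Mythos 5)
Your proof is correct. The paper reaches the identical numerical bound by a different citation: it invokes Jones's inequality $|M(G)|\,|G'| \ge |M(G/G')|$ (\cite[Theorem 3.1]{MRRR}) together with Lemma \ref{EA} to get $|M(G)| \ge p^{\frac{1}{2}(n-1)(n-2)-1} = p^{\frac{1}{2}n(n-3)}$ in one line, with no need to observe that $G$ has class $2$ or to set up the Blackburn--Evens machinery. Your route through Theorem \ref{BEE} is equally valid: the hypotheses hold because $G'$ has order $p$ and is therefore central, so $G$ has class $2$, and $G^{ab}$ is elementary abelian by Lemma \ref{EA}; and it yields the sharper statement $|M(G)| = |N|\cdot p^{\frac{1}{2}n(n-3)}$ rather than a bare inequality, though that extra precision is not needed here. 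Both arguments rest on Lemma \ref{EA} and hence on $n \ge 6$; your explicit remark that this restriction is essential (with $\Phi_2(22)$ as the witness at $n=4$) is a point the paper leaves implicit, the cases $n \le 5$ having been disposed of separately in its first lemma.
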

\begin{proof}
Note that $G'$ is central subgroup of $G$. By \cite[Theorem 3.1]{MRRR}, we have $|M(G)||G'| \geq |M(G/G')|$ and by Lemma \ref{EA} $|M(G/G')|=p^{\frac{1}{2}(n-1)(n-2)}$. Therefore $|M(G)|\geq p^{\frac{1}{2}(n-1)(n-2)-1}$, which is a contradiction.
\hfill$\Box$

\end{proof}
Now we consider groups such that $|G'|=p^2$.
\begin{lemma}\label{BH}
Let $G$ be a $p$-group of order $p^n$ $(n \geq 6)$ with $|G'|=p^2$ and $t(G) \leq \log_p(|G|)+1$. If $K$ is a cyclic central subgroup of order $p$ in $G' \cap Z(G)$, then $G/K$ is isomorphic to one of the following groups:
$ES(p^3) \times \mathbb{Z}_p^{(n-4)}, E(2) \times \mathbb{Z}_p^{(n-2m-3)},$ $ ES(p^{2m+1}) \times \mathbb{Z}_p^{(n-2m-2)}, D_8 \times \mathbb{Z}_2^{(n-4)}, Q_8 \times \mathbb{Z}_2^{(n-4)}$, where $ES(p^3),$ $ES(p^{2m+1})$ denotes extra special $p$-group of order $p^3$ and $p^{2m+1}$ $(m \geq 2)$ respectively, and $E(2)$ denotes central product of an extra special $p$-group of order $p^{2m+1}$ and a cyclic group of order $p^2$.
\end{lemma}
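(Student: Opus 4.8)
The plan is to reduce the classification of $G/K$ to a classification of $p$-groups $H$ with $|H'| = p$ whose Schur multiplier is almost as large as Green's bound allows. Since $K \le G' \cap Z(G)$ has order $p$ and $|G'| = p^2$, the quotient $H := G/K$ has $|H'| = |G'/K| = p$, and $H$ is non-abelian of order $p^{n-1}$. The key input will be Theorem \ref{J} applied with the central subgroup $K$: it gives $|M(G)|\,|G' \cap K| \le |M(H)|\,|M(K)|\,|H^{ab} \otimes K|$, i.e. $|M(G)|\,p \le |M(H)|\cdot 1 \cdot |H^{ab} \otimes \mathbb{Z}_p|$. By Lemma \ref{EA}, $H^{ab} = G^{ab}$ is elementary abelian, say of rank $d = n-1-1 = n-2$ (using $|H'| = p$), so $|H^{ab}\otimes \mathbb{Z}_p| = p^{n-2}$. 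Feeding in $t(G) = \log_p|G| + 1 = n+1$, so $|M(G)| = p^{\frac12 n(n-1) - (n+1)} = p^{\frac12 n(n-3) - 1}$, we obtain a lower bound
\[
|M(H)| \;\ge\; p^{\frac12 n(n-3) - 1 + 1 - (n-2)} \;=\; p^{\frac12(n-1)(n-2) - (n-2)} \;=\; p^{\frac12(n-2)(n-3)}.
\]
So $H$ is a non-abelian $p$-group of order $p^{n-1}$ with derived subgroup of order $p$ whose Schur multiplier has order at least $p^{\frac12(n-2)(n-3)}$; writing $m = n-1$, this says $t(H) \le (m-1) = \log_p|H| - 1$, which is $s(H) \le 1$ in Niroomand's notation.

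Next I would invoke the known classification of non-abelian $p$-groups with $s(H) = 0$ or $s(H) = 1$ (equivalently $t(H) = \log_p|H|$ or $\log_p|H| - 1$), which by the discussion in the introduction is the content of \cite{PN2, PN4}. Since we also know $|H'| = p$, only the groups on those lists with derived subgroup of order exactly $p$ survive. For $p$ odd these are precisely the groups $ES_p(p^3) \times \mathbb{Z}_p^{(n-4)}$, the central product $E(2) \times \mathbb{Z}_p^{(n-2m-3)}$, and $ES_p(p^{2m+1}) \times \mathbb{Z}_p^{(n-2m-2)}$ with $m \ge 2$; for $p = 2$ one picks up additionally $D_8 \times \mathbb{Z}_2^{(n-4)}$ and $Q_8 \times \mathbb{Z}_2^{(n-4)}$. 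I would verify in each case that the group does in fact have derived subgroup of order $p$ (immediate, since each factor is extraspecial or a central product of an extraspecial group with a cyclic group, and $\mathbb{Z}_p$-factors are abelian) and that it is consistent with $t(H) \le \log_p|H| - 1$; conversely, any $H$ with $|H'| = p$ and $t(H) \le \log_p|H|-1$ must appear on the union of the $s(H)=0,1$ lists, so no further groups arise.

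The main obstacle I anticipate is not the inequality bookkeeping — that is routine given Theorem \ref{J} and Lemma \ref{EA} — but rather ensuring that the cited classifications of $s = 0$ and $s = 1$ groups are being quoted in a form that isolates exactly the $|H'| = p$ case, and that the parameter ranges (the exponents $n - 4$, $n - 2m - 3$, $n - 2m - 2$ and the constraint $m \ge 2$) are matched correctly so that no sporadic small-$n$ group is omitted or double-counted; the case $n = 6$ (so $|H| = p^5$) should be checked by hand or against the order-$p^5$ classification to confirm the list is complete at the boundary. A secondary subtlety is that Theorem \ref{J} only gives a divisibility/inequality in one direction, so one must be careful that the lower bound on $|M(H)|$ derived above is genuinely a consequence and does not require the reverse inclusion; this is fine because $|G' \cap K| = |K| = p$ exactly (as $K \le G'$), so the factor of $p$ on the left is exact rather than a lower bound.
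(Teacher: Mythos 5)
Your overall strategy is exactly the paper's: apply Theorem \ref{J} with the central subgroup $K$, use $|M(K)|=1$, $|G'\cap K|=p$ and the elementary abelian quotient from Lemma \ref{EA} to turn the hypothesis on $t(G)$ into a lower bound on $|M(G/K)|$, and then read off $G/K$ from Niroomand's classifications. However, there is a concrete arithmetic slip that changes which classifications you are entitled to quote. The exponent you compute is $\tfrac12 n(n-3)-1+1-(n-2)=\tfrac12 n(n-3)-(n-2)$, and you rewrite $\tfrac12 n(n-3)$ as $\tfrac12 (n-1)(n-2)$; but $\tfrac12(n-1)(n-2)=\tfrac12 n(n-3)+1$, so your lower bound is too large by a factor of $p$. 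The correct bound is $|M(G/K)|\ge p^{\frac12(n-1)(n-4)}=p^{\frac12(n-2)(n-3)-1}$, which, since $|G/K|=p^{n-1}$, gives only $s(G/K)\le 2$, not $s(G/K)\le 1$.

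This is not a cosmetic issue: the lemma's list contains groups such as $E(2)\times\mathbb{Z}_p^{(n-2m-3)}$ (central product of an extraspecial group with $\mathbb{Z}_{p^2}$) and $Q_8\times\mathbb{Z}_2^{(n-4)}$, which lie in the $s=2$ stratum and are therefore invisible to the $s\in\{0,1\}$ classification you invoke. The paper accordingly cites all three classifications --- \cite[Theorem 21 and Corollary 23]{PN2} for $s=0,1$ \emph{and} \cite[Theorem 11]{PN4} for $s=2$ --- and then restricts to the groups on those lists with derived subgroup of order $p$. With the exponent corrected to $s(G/K)\le 2$ and \cite[Theorem 11]{PN4} added, your argument becomes the paper's proof; as written, it would wrongly exclude the $s=2$ quotients and so does not establish the stated list.
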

\begin{proof}
Suppose that $G$ is a $p$-group with $|G'|=p^2$. Now consider a cyclic central subgroup $K$ of order $p$ in $G' \cap Z(G)$. Then by Theorem \ref{J} and \cite[Main Theorem]{PN} we have 

\centerline{$|M(G)| \leq |M(G/K)|p^{(n-3)} \leq p^{\frac{1}{2}(n-2)(n-3)+1}p^{n-3}=p^{\frac{1}{2}n(n-3)+1}$}

Now using \cite[Theorem 21 and Corollary 23]{PN2} and \cite[Theorem 11]{PN4} for group $G/K$, we get our result.
\hfill$\Box$

\end{proof}
\begin{prop}
There is no non-abelian $p$-group $G$ of order $p^n$ $(n \geq 6)$ with $|G'|=p^2, |Z(G)|=p$ and $t(G) \leq \log_p(|G|)+1$.
\end{prop}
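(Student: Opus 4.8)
The plan is to determine the relevant sections of $G$ and then apply the class-$3$ bound of Theorem \ref{RM}. Observe first that $G'\cap Z(G)\neq 1$ since $G$ is a $p$-group, so $|Z(G)|=p$ forces $Z(G)\le G'$ and $|G'\cap Z(G)|=p$; as $|G'|=p^2>p$ we get $G'\not\le Z(G)$, hence $G$ has nilpotency class at least $3$. If $G$ had class $c\ge 4$, then $\gamma_2(G)\supsetneq\gamma_3(G)\supsetneq\gamma_4(G)\supsetneq 1$ would force $|\gamma_2(G)|\ge p^3$, contradicting $|G'|=p^2$; so $G$ has class exactly $3$. Since $1\neq\gamma_3(G)\le Z(G)$ we get $\gamma_3(G)=Z(G)$, $|\gamma_2(G)|=p^2$ and $|\gamma_2(G)/\gamma_3(G)|=p$. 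Finally $(G/Z(G))^{ab}=G/G'=G^{ab}$, which by Lemma \ref{EA} is elementary abelian of order $p^{n-2}$; in particular $G/Z(G)$ is one of the class-$2$ groups listed in Lemma \ref{BH}.

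Applying Theorem \ref{RM} with $\bar G=G/Z(G)$ and substituting $|M(G^{ab})|=p^{(n-2)(n-3)/2}$, $|\gamma_2(G)|=p^2$ and $\big|\tfrac{\gamma_2(G)}{\gamma_3(G)}\otimes\bar{G}^{ab}\big|=\big|\gamma_3(G)\otimes\bar{G}^{ab}\big|=p^{n-2}$ gives
\[ |M(G)|\ \le\ p^{\,(n^2-n-6)/2\,-\,d_2\,-\,d_3},\qquad d_i:=\log_p|\operatorname{Image}(\psi_i)|. \]
Since $t(G)\le\log_p|G|+1$ is equivalent to $|M(G)|\ge p^{(n^2-3n-2)/2}$, it suffices to prove $d_2+d_3\ge n-1$ to reach a contradiction.

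To estimate $d_2,d_3$ I would work with the nonzero alternating form $B$ on $\bar{G}^{ab}$ induced by the commutator map of $G/\gamma_3(G)=G/Z(G)$ (with values in $\gamma_2(G)/\gamma_3(G)\cong\mathbb F_p$); let $R$ be its radical, so $\dim R=n-2-2r$ with $2r=\operatorname{rank}B\ge 2$. Because $|\gamma_2(G)/\gamma_3(G)|=p$, the triple commutator factors as $[[x,y],z]\leftrightarrow B(\bar x,\bar y)\,\mu(\bar z)$ for a linear map $\mu\colon\bar{G}^{ab}\to\gamma_3(G)\cong\mathbb F_p$, with $\mu\neq 0$ (as $G$ has class $3$) and $\mu|_R=0$ (by the Jacobi--Witt--Hall identity). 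If $2r\ge 4$, the $B$-orthogonal complements of the hyperbolic planes, together with $R$, span $\bar{G}^{ab}$, so $\operatorname{Image}(\psi_2)=\bar{G}^{ab}$ and $d_2=n-2$; also $\psi_3\neq 0$ (otherwise the factorization above forces $\mu=0$), so $d_3\ge 1$, and $d_2+d_3\ge n-1$. If $2r=2$, then in the generator $B(\bar x_1,\bar x_2)\bar x_3+B(\bar x_2,\bar x_3)\bar x_1+B(\bar x_3,\bar x_1)\bar x_2$ of $\operatorname{Image}(\psi_2)$ the part lying in a $B$-nondegenerate complement of $R$ vanishes (any three vectors of a $2$-dimensional space are linearly dependent), whence $\operatorname{Image}(\psi_2)=R$ and $d_2=n-4$; and, fixing a hyperbolic pair $\bar a,\bar b$ normalized so that $\mu(\bar a)=1$, $\mu(\bar b)=0$, the quadruples $\bar a\otimes\bar b\otimes\bar a\otimes\bar w$ with $\bar w\in R$ show $R\subseteq\operatorname{Image}(\psi_3)$, while $\psi_3(\bar a\otimes\bar b\otimes\bar a\otimes\bar b)=2\bar b$, so for odd $p$ we obtain $\operatorname{Image}(\psi_3)\supseteq\ker\mu$ (after identifying $\gamma_3(G)\otimes\bar{G}^{ab}$ with $\bar{G}^{ab}$) and $d_3\ge n-3$; hence $d_2+d_3\ge 2n-7\ge n-1$ for $n\ge 6$. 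In every case $d_2+d_3\ge n-1$, the desired contradiction.

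The main obstacle is the case $2r=2$: one must extract the combined estimate $d_2+d_3\ge 2n-7$ from Theorem \ref{RM}, which relies on the factorization $[[x,y],z]\leftrightarrow B(\bar x,\bar y)\mu(\bar z)$ with $\mu|_R=0$ together with the explicit evaluation of $\psi_2$ and $\psi_3$ on well-chosen tensors. For $p=2$ the same argument, using only $\operatorname{Image}(\psi_3)\supseteq R$, already yields the contradiction when $n\ge 7$; the finitely many groups with $p=2$ and $n=6$ are disposed of by direct computation in GAP/HAP, as in the case $n\le 5$.
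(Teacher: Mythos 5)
Your argument is correct, and it reaches the conclusion by a genuinely different route from the paper, even though both hinge on the same inequality (Theorem \ref{RM}). The paper's proof takes generators $x,y,\beta_1,\dots,\beta_{n-4}$ with $[x,y]\notin Z(G)$ and extracts crude lower bounds $|\mathrm{Image}(\psi_2)|\ge p^{n-4}$ and $|\mathrm{Image}(\psi_3)|\ge p^{n-5}$ (or $p^{n-4}$ in a sub-case) by exhibiting explicit linearly independent elements; this only yields a contradiction for $n\ge 8$ (resp.\ $n\ge 7$), so the paper must then dispose of the residual cases separately: $n=6$ via James's isoclinism class $\Phi_{22}$, a leftover $p=3$, $n=7$ configuration via GAP/HAP, and $p=2$, $n=6$ via GAP/HAP. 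Your structural analysis --- reducing to the exact class-$3$ situation $\gamma_3(G)=Z(G)$, $|\gamma_2/\gamma_3|=p$, encoding the commutator structure in the alternating form $B$ and the functional $\mu$ with $\mu|_R=0$ from the Hall--Witt identity, and then computing $\mathrm{Image}(\psi_2)$ and bounding $\mathrm{Image}(\psi_3)$ according to $\operatorname{rank}B$ --- delivers the sharper combined bound $d_2+d_3\ge n-1$ uniformly for all $n\ge 6$ when $p$ is odd (and for $n\ge 7$ when $p=2$), eliminating the appeal to the $p^6$ classification and the $3^7$ computation entirely; only $p=2$, $n=6$ still needs the machine, exactly as in the paper. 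The trade-off is that your version requires the careful verification that the triple commutator factors as $B(\bar x,\bar y)\mu(\bar z)$ and the symplectic normal-form bookkeeping, whereas the paper's version is shorter to write but leans on external tables and computation; I checked your numerics ($|M(G)|\le p^{(n^2-n-6)/2-d_2-d_3}$, the threshold $d_2+d_3\ge n-1$, $\psi_3(\bar a\otimes\bar b\otimes\bar a\otimes\bar b)=2\bar b$, and $2n-7\ge n-1$ for $n\ge 6$) and they are all correct.
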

\begin{proof}
By Lemma \ref{EA}, $G/G'$ is elementary abelian group of order $p^{n-2}$ . 
Thus $G$ is $(n-2)$-generator group. We can choose generators $x,y,\beta_1, \beta_2$ $ \cdots ,\beta_{n-4}$  of $G$ such that $[x,y]=z \notin Z(G)$. Now we claim that $|$Image$(\psi_3)| \geq p^{n-4}$.

If $[z,x]$ is non-trivial in $Z(G)$, then $\psi_3(x \otimes y \otimes x \otimes \beta_i)$ $i=1, \cdots ,n-4$ gives $n-4$ linearly independent elements of $\gamma_3(G) \otimes \bar{G}^{ab}$.
By symmetry if $[z,y]$ is non-trivial in $Z(G)$, then we have a similar conclusion.
On the other hand if both are trivial i.e., $[z,x]=[z,y]=1$, then $[z,\beta_k]$ is non-trivial in $Z(G)$ for some $\beta_k$ and $\psi_3(x \otimes y \otimes \beta_k \otimes \beta_i)(i \neq k)$ give $n-5$ linearly independent elements of $\gamma_3(G) \otimes \bar{G}^{ab}$. Hence $|$Image$(\psi_3)| \geq p^{n-5}$. Note that $|$Image$(\psi_2)| \geq p^{n-4}$. So by Theorem \ref{RM} we have

\centerline{$p^2|M(G)||$Image$(\psi_2)||$Image$(\psi_3)|  \leq p^{\frac{1}{2}(n-2)(n-3)}p^{2(n-2)}$.} 
It follows that $|M(G)| \leq p^{\frac{1}{2}n(n-3)-n+6}$, which is a contradiction for $n \geq 8$.

Now if either $\psi_3(x \otimes y \otimes \beta_k \otimes x)$ or  $\psi_3(x \otimes y \otimes y \otimes \beta_k)$ is non-trivial then $|$Image$(\psi_3)| \geq p^{n-4}$ and

\centerline{$p^2|M(G)||$Image$(\psi_2)||$Image$(\psi_3)|  \leq p^{\frac{1}{2}(n-2)(n-3)}p^{2(n-2)}$.} 
It follows that $|M(G)| \leq p^{\frac{1}{2}n(n-3)-n+5}$, which is a contradiction for $n \geq 7$. 
Otherwise suppose $\psi_3(x \otimes y \otimes \beta_k \otimes x)=\psi_3(x \otimes y \otimes y \otimes \beta_k)=1$, then $[x,y,\beta_k]=[\beta_k,x,y]=[y,\beta_k,x]$  and $p=3$. By HAP \cite{HAP} of GAP \cite{GAP} there is no group $G$ of order $3^7$ with $|G'|=3^2, |Z(G)|=3$ and $|M(G)|=3^{13}$.

For $|G|=p^6$ $(p \neq 2)$, by \cite{RJ} it follows that $G$ belongs to the isoclinism class $\Phi_{22}$. In this case $|$Image$(\psi_2)| \geq p^2$ and $|$Image$(\psi_3)| \geq p^3$. Hence it follows from Theorem \ref{RM} that $|M(G)| \leq p^7$, which is not our case.

For $p=2$, there is no group $G$ of order $2^6$ which satisfies the given hypothesis, follows from computation with HAP \cite{HAP} of GAP \cite{GAP}.
\hfill$\Box$

\end{proof}
\begin{lemma}\label{T}
Let $G$ be a non-abelian $p$-group of order $p^n$ $(n \geq 6)$ with $t(G)=\log_p(|G|)+1$ and $|G'|=p^2$. If there exists a central subgroup $K$ of order $p$ such that $K \cap G'=1$, then $G/K$ is isomorphic to either $\mathbb{Z}_p^{(4)} \rtimes \mathbb{Z}_p$ or $(\mathbb{Z}_p^{(4)} \rtimes \mathbb{Z}_p) \times \mathbb{Z}_p$ and $p$ is odd.
\end{lemma}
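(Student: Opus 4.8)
The plan is to exploit the hypothesis $|G'| = p^2$ together with the constraint $t(G) = \log_p(|G|)+1$, i.e. $|M(G)| = p^{\frac12 n(n-3)-1}$, to pin down the isomorphism type of the quotient $A := G/K$. Since $K$ is central of order $p$ with $K \cap G' = 1$, we have $(G/K)' \cong G'$, so $|A'| = p^2$, and by Lemma \ref{EA} both $G^{ab}$ and $A^{ab}$ are elementary abelian; note $|A| = p^{n-1}$ and $A^{ab}$ is elementary abelian of order $p^{n-3}$. First I would feed $K$ into Theorem \ref{J}: since $K \cap G' = 1$ the factor $|G' \cap K| = 1$, $|M(K)| = 1$, and $|A^{ab} \otimes K| = p^{n-3}$, so
\[
|M(G)| \ \big|\ |M(A)|\, p^{n-3},
\]
whence $|M(A)| \geq p^{\frac12 n(n-3) - 1 - (n-3)} = p^{\frac12(n-1)(n-4)}$. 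Writing $|A| = p^{m}$ with $m = n-1$, this says $|M(A)| \geq p^{\frac12 m(m-3)}$, i.e. $t(A) \leq \log_p(|A|)$, so $A$ is one of the groups in the classification of non-abelian $p$-groups with $s(A) \leq 2$, namely \cite[Theorem 21 and Corollary 23]{PN2} and \cite[Theorem 11]{PN4} (the same results invoked in Lemma \ref{BH}). This immediately restricts $A$ to a short explicit list.

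Next I would cut that list down using $|A'| = p^2$. Most of the groups appearing in the $s(A) \le 2$ classification have derived subgroup of order $p$ (extra-special-by-abelian types) or are of maximal class; those with $|A'|=p^2$ and $A^{ab}$ elementary abelian are very few. The surviving candidates should be $\mathbb{Z}_p^{(4)} \rtimes \mathbb{Z}_p$ (a group of order $p^5$, class $2$, with $A' \cong \mathbb{Z}_p^{(2)}$), possibly direct products of it with an elementary abelian group to reach order $p^{n-1}$ for $n > 6$, and a small number of competitors that I would rule out by checking their Schur multipliers do not meet the bound $|M(A)| \geq p^{\frac12(m-1)(m-2)}$ forced above — or, when $n$ is small, by direct reference to Theorem \ref{SHHH} and the $n=6$ HAP/GAP computations already used in the preceding results. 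This should leave exactly $G/K \cong \mathbb{Z}_p^{(4)} \rtimes \mathbb{Z}_p$ or $(\mathbb{Z}_p^{(4)} \rtimes \mathbb{Z}_p) \times \mathbb{Z}_p$, forcing $n = 6$ or $n = 7$.

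Finally, to get the conclusion that $p$ is odd, I would observe that for $p = 2$ the group $\mathbb{Z}_2^{(4)} \rtimes \mathbb{Z}_2$ and its elementary-abelian extensions either fail to occur as a quotient $G/K$ with the required multiplier, or are excluded by the $p=2$ GAP computations for order $2^6$ and $2^7$ that parallel the ones cited in the previous proposition; concretely, one checks there is no $2$-group $G$ of order $2^n$ with $|G'| = 4$, a central $\mathbb{Z}_2$ meeting $G'$ trivially, and $|M(G)| = 2^{\frac12 n(n-3)-1}$. The main obstacle I anticipate is the middle step: extracting from the $s(A) \le 2$ classification precisely which groups have $|A'| = p^2$ and elementary abelian abelianization, and then verifying that the only ones whose Schur multiplier is large enough to satisfy $|M(A)| \geq p^{\frac12(n-1)(n-4)}$ are the two claimed quotients — this requires carefully matching the divisibility bound from Theorem \ref{J} against the known multiplier orders in \cite{PN2, PN4}, with the borderline cases (where the bound is met with near-equality) needing the most care, and the small-order cases $n = 6, 7$ handled separately by the computational tools already in use.
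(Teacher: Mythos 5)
Your proposal follows essentially the same route as the paper: apply Theorem \ref{J} with the given central $K$ (using $K\cap G'=1$) to force $|M(G/K)| \ge p^{\frac{1}{2}(n-1)(n-4)}$, and then invoke \cite[Theorem 21]{PN2} and \cite[Theorem 11]{PN4} for the quotient of order $p^{n-1}$ with derived subgroup of order $p^2$. The only cosmetic differences are that the paper reads off $G/K$, and the oddness of $p$, directly from those two characterization theorems (so the extra list-filtering and the separate $p=2$ GAP check you anticipate are unnecessary), and that the bound ``$|M(A)|\geq p^{\frac{1}{2}(m-1)(m-2)}$'' in your third paragraph is a slip for the bound $p^{\frac{1}{2}m(m-3)}$ you correctly derived earlier.
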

\begin{proof}
By Theorem \ref{J} and \cite[Main Theorem]{PN}, we have 

\centerline{$|M(G)| \leq |M(G/K)|p^{(n-3)} \leq p^{\frac{1}{2}(n-1)(n-4)+1+n-3}=p^{\frac{1}{2}n(n-3)}$.} 

Note that $|G/K| \geq p^5$ with $(G/K)'=p^2$.
Now we have $|M(G/K)|=p^{\frac{1}{2}(n-1)(n-4)+1}$ if and only if $G/K \cong \mathbb{Z}_p^{(4)} \rtimes \mathbb{Z}_p$ $(p \neq 2)$ by \cite[Theorem 21]{PN2} and  $|M(G/K)|=p^{\frac{1}{2}(n-1)(n-4)}$ if and only if $G/K \cong (\mathbb{Z}_p^{(4)} \rtimes \mathbb{Z}_p) \times \mathbb{Z}_p$ $(p \neq 2)$ by \cite[Theorem 11]{PN4}.
\hfill$\Box$

\end{proof}
\begin{prop}\label{K}
There is no non-abelian $p$-group of order $p^7$ with $G' =Z(G) \cong \mathbb{Z}_p \times \mathbb{Z}_p$ and $t(G) = \log_p(|G|)+1$.
\end{prop}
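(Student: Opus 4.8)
The plan is to argue by contradiction via the nilpotency-class-$3$ inequality of Theorem \ref{RM}, showing that any such $G$ would force $|M(G)|$ strictly larger than the allowed value $p^{\frac{1}{2}\cdot 7\cdot 4 - 1}=p^{13}$, i.e.\ $t(G)=\log_p(|G|)+1=8$. Suppose $G$ has order $p^7$, $G'=Z(G)\cong\mathbb Z_p\times\mathbb Z_p$, and $t(G)=8$. Since $G/Z(G)$ has order $p^5$ and $|G'|=p^2$, by Lemma \ref{EA} we know $G^{ab}$ is elementary abelian of order $p^5$, so $G$ is a $5$-generator group and $\bar G=G/Z(G)$ is a $5$-generator group of order $p^5$ with $\bar G'=\gamma_2(G)/\gamma_3(G)$ (note $\gamma_3(G)\subseteq Z(G)$ and $\gamma_4(G)=1$). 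First I would pin down the nilpotency class: since $G$ is non-abelian with $G'=Z(G)$, $G$ has class $\geq 3$ (if class were $2$ then $G'\subseteq Z(G)$ properly unless $G'=Z(G)$, but class $2$ with $G'=Z(G)$ is possible — so actually I must split into the class-$2$ and class-$3$ cases). In the class-$2$ case I would instead invoke Theorem \ref{BEE}: with $V=G/G'$ of dimension $5$ and $W=G'$ of dimension $2$, one has $|M(G)|=|V\wedge V|\cdot|N|/|W| = p^{10}|N|/p^2 = p^8|N|$ where $N\cong (V\otimes W)/X$; since $\dim(V\otimes W)=10$ and the commutator map is a nondegenerate alternating form $V\times V\to W$ (nondegeneracy because $Z(G)=G'$ forces the radical of each component to be trivial), one computes $\dim X$ and checks $|M(G)|\geq p^{13}$ cannot be improved to be $<p^{13}$ — in fact a careful count of $X_1$ (there is no $X_2$ since $G^{ab}$ is elementary abelian and $p$ is odd, so $f\equiv 0$) gives $\dim X\leq$ something forcing $\dim N\geq 5$ and hence $|M(G)|\geq p^{13}$, contradiction unless equality, which one then rules out by the exponent/structure constraints or by direct reference to James's list $\Phi_k$ for $p^6$... wait, here $|G|=p^7$, so one must be more careful.

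For the class-$3$ case, which I expect is the substantive one, I would apply Theorem \ref{RM} directly: $|M(G)|\cdot|\gamma_2(G)|\cdot|\mathrm{Image}(\psi_2)|\cdot|\mathrm{Image}(\psi_3)| \leq |M(G^{ab})|\cdot|\tfrac{\gamma_2(G)}{\gamma_3(G)}\otimes\bar G^{ab}|\cdot|\gamma_3(G)\otimes\bar G^{ab}|$. Here $|M(G^{ab})|=p^{\frac12\cdot 5\cdot 4}=p^{10}$, $|\gamma_2(G)|=p^2$, and writing $|\gamma_3(G)|=p^a$ (so $|\gamma_2/\gamma_3|=p^{2-a}$, with $a=1$ or $a=2$ — $a=0$ is the class-$2$ case), the right side is $p^{10}\cdot p^{5(2-a)}\cdot p^{5a}=p^{10}\cdot p^{10}=p^{20}$, independent of $a$. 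Then I would establish the key lower bounds $|\mathrm{Image}(\psi_2)|\geq p^{\delta-2}=p^{3}$ (as in the proof of Lemma \ref{EA}, since $\bar G$ is a $5$-generator class-$2$ group) and a lower bound $|\mathrm{Image}(\psi_3)|\geq p^{?}$ mimicking the argument in the proof of the preceding Proposition: picking generators $x,y,\beta_1,\dots$ with $[x,y]=z\notin Z(G)$, the elements $\psi_3(x\otimes y\otimes u\otimes \beta_i)$ for suitable $u\in\{x,y,\beta_k\}$ and varying $i$ produce many independent elements of $\gamma_3(G)\otimes\bar G^{ab}$. Combining, $|M(G)|\cdot p^2\cdot p^3\cdot|\mathrm{Image}(\psi_3)|\leq p^{20}$, so $|M(G)|\leq p^{15}/|\mathrm{Image}(\psi_3)|$; one needs $|\mathrm{Image}(\psi_3)|\geq p^{3}$ to conclude $|M(G)|\leq p^{12}<p^{13}$, a contradiction.

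The hard part, as in the analogous earlier proposition, will be squeezing the last factor of $p$ out of $|\mathrm{Image}(\psi_3)|$: the generic bound only gives $|\mathrm{Image}(\psi_3)|\geq p^{n-5}=p^2$, which yields $|M(G)|\leq p^{13}$ — not quite a contradiction — so I must show the bound $p^2$ is never attained, or rule out the equality case separately. I would handle this by the same trichotomy used in the proof of the Proposition before Lemma \ref{T}: analyze whether $[z,x]$, $[z,y]$, or $[z,\beta_k]$ is nontrivial, and in the residual degenerate configuration where all the obvious $\psi_3$-values collapse, one is forced into $p=3$ and a very rigid commutator structure, which can be eliminated by a finite check in HAP/GAP (there are only finitely many groups of order $3^7$ with $G'=Z(G)\cong\mathbb Z_3^{(2)}$, and none has $|M(G)|=3^{13}$), together with the exponent bound from Lemma \ref{EXP} restricting $Z(G)$ to exponent $p$. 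I would also double-check that the class-$2$ subcase is genuinely impossible by the Blackburn–Evens count above, or alternatively fold it into the statement that $G'=Z(G)$ with $|G'|=p^2$ and $|G|=p^7$ forces enough commutator relations that $\dim X$ is too small. Finally, assembling both cases gives the nonexistence claimed.
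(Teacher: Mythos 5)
There is a genuine gap, and it starts at the very first structural observation: $G'=Z(G)$ forces $\gamma_3(G)=[G',G]=[Z(G),G]=1$, so $G$ has nilpotency class exactly $2$. Your ``class-$3$ case,'' which is where you place the substantive work (the Theorem \ref{RM} inequality, the bounds on $\mathrm{Image}(\psi_3)$, the trichotomy on $[z,x],[z,y],[z,\beta_k]$), is vacuous -- Theorem \ref{RM} does not even apply here. The whole proof must live in the class-$2$ branch, which you treat only sketchily, and there your argument has two errors. First, $X_2$ does \emph{not} vanish just because $G^{ab}$ is elementary abelian: the map $f(gG')=g^p$ takes values in $G'$, and it is nonzero exactly when $G$ has exponent $p^2$. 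The paper's main computation in the exponent-$p^2$ case consists precisely of exhibiting enough independent elements of $X_2$ together with elements of $X_1$ to force $|X|\geq p^8$, whence $|M(G)|=p^{18}/|X|\leq p^{10}<p^{13}$. Second, your inequality points the wrong way: concluding $|M(G)|\geq p^{13}$ is not a contradiction with the hypothesis $|M(G)|=p^{13}$; one needs to show $|X|\neq p^5$, and in fact the paper shows $|X|$ is much larger than $p^5$.

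You are also missing the two inputs the paper actually relies on for the exponent-$p$ case, where $X=X_1$ and the count depends only on the commutator structure. The paper first observes via \cite[Theorem 2.5.10]{GK} that any nontrivial epicenter already forces $|M(G)|<p^{13}$, so one may assume $G$ is capable; it then invokes the Heineken--Kappe--Morse classification \cite{HKM} of capable special $p$-groups of rank two, which leaves a single group of order $p^7$, for which $|X_1|=p^9$ and hence $|M(G)|=p^9$. Your proposed substitute -- a nondegeneracy claim about the alternating form $V\times V\to W$ -- neither reduces the number of groups to check nor determines $\dim X_1$, so the argument cannot close without something playing the role of the capability reduction and the \cite{HKM} classification.
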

\begin{proof}
Note that if $Z^*(G)$ contains any central subgroup of $G$, then $|M(G)| < p^{13}$, by \cite[Theorem 2.5.10]{GK}, which is not our case. So we have $Z^*(G)=1$, i.e, $G$ is capable.

First we consider groups $G$ of order $p^7$ of exponent $p^2$, for odd $p$. Note that $G/G'$ is elementary abelian of order $p^5$ by Lemma \ref{EA}. So we can take a generating set $\{ \beta_1,\beta_2,\beta_3,\beta_4,\beta_5 \}$ of $G$ and $\{\eta, \gamma\}$ of $G'$. Here either $|G^p|=p$ or $G^p=G'$. We claim that $|X| \geq p^8$.

Let $|G^p|=p$. Without loss of generality assume that $\eta$ is $p$-th power of some $\beta_k$, say $\beta_1$, $[\beta_i,\beta_j] \notin \gen{\eta}$ and all $\beta_k$'s $(k > 1)$ are of order $p$. Then $\langle \beta_i \otimes \beta_1^p, i \in \{1,2,3,4,5\}\rangle$ is a subspace of $X_2$ and $\langle \psi_2(\beta_k \otimes \beta_i \otimes \beta_j), k \in \{1,2,3,4,5\}, k \neq i,j \rangle$ is a subspace of $X_1$. For $G^p=G'$, without loss of generality, assume that both $\eta, \gamma$ are $p$-th power of some $\beta_{k_1},\beta_{k_2}$, say $\beta_1$ and $\beta_2$ respectively and all other $\beta_i$'s are of order $p$, then $\langle \beta_i \otimes \beta_1^p, \beta_j \otimes \beta_2^p, i \in \{1,3,4,5\}, j \in \{2,3,4,5\}\rangle$ is a subspace of $X_2$.

Hence we observe that for non-abelian group $G$ of order $p^7$ of exponent $p^2$, $|X| \geq p^8$ and by Theorem \ref{BEE}, $|M(G)| < p^{13}$, a contradiction. 

Now consider groups of order $p^7$ of exponent $p$. By \cite{HKM}  it follows that there is only one capable group 
\[ G =\gen{x_1,\cdots , x_5, c_1, c_2 \mid [x_2, x_1] =[x_5, x_3]= c_1, [x_3, x_1] = [x_5, x_4] = c_2}\] upto isomorphism.
By Theorem \ref{BEE} we have $|M(G)|=p^9$ as $|X|=|X_1|=p^9$ which is not our case.
\hfill$\Box$

\end{proof}
The following result weaves the next thread in the proof of Main Theorem.
\begin{thm}
Let $G$ be a non-abelian $p$-group of order $p^n$ $(n \geq 6)$ with $|G'|=p^2$,$ |Z(G)| \geq p^2$ and $t(G)=\log_p(|G|)+1$. Then $G$ is isomorphic to $\Phi_{12}(1^6),\Phi_{13}(1^6),$ $\Phi_{15}(1^6)$ or $(\mathbb{Z}_p^{(4)} \rtimes \mathbb{Z}_p) \times \mathbb{Z}_p^{(2)}$. Moreover, $p$ is always odd.
\end{thm}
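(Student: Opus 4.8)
The plan is to split the argument according to the structure of $Z(G)$ relative to $G'$ and to use the reduction lemmas already established. Since $|G'|=p^2$ and $|Z(G)|\geq p^2$, either $Z(G)\cap G'$ has order $p$ (so $G'\not\subseteq Z(G)$), or $G'\subseteq Z(G)$; in the latter case $G$ has class $2$. I would first dispose of the class-$2$ case. When $G'\subseteq Z(G)$, Lemma \ref{EA} gives that $G^{ab}$ is elementary abelian, so we are exactly in the setting of the Blackburn--Evens machinery of Theorem \ref{BEE}, with $\dim V=n-2$, $\dim W=2$. The task is to show that $|X|$ is forced to be large enough that $|M(G)|=p^{\frac12 n(n-3)-1}$ pins $G$ down to the listed groups. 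Here I would argue that, after choosing generators adapted to the commutator form $(\,,\,)\colon V\times V\to W$ and to the power map $f$, the space $X_1$ already contributes many independent elements $\psi_2(v_i\otimes v_j\otimes v_k)$ unless the commutator structure is very constrained, and $X_2$ contributes $v\otimes f(v)$ unless $G$ has exponent $p$. Counting these carefully should leave only finitely many ``extremal'' configurations, which for $|G|=p^6$ are precisely the isoclinism types $\Phi_{12}(1^6),\Phi_{13}(1^6),\Phi_{15}(1^6)$ (and one checks $\Phi_{11}(1^6)$ and $\Phi_{14}$ etc. fall outside by the same count, or are handled in a later section), and for $n>6$ only $(\mathbb{Z}_p^{(4)}\rtimes\mathbb{Z}_p)\times\mathbb{Z}_p^{(2)}$ survives because any extra direct/semidirect factor forces $|X|$ up by the right amount. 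The appearance of the non-quadratic-residue parameter $g$ in $\Phi_{15}(1^6)$ and the exclusion of $p=2$ come out of the classification of alternating/bilinear forms $V\times V\to W$ over $GF(p)$; for $p=2$ the relevant forms do not produce a multiplier of the required order, which I would verify by the same $|X|$-count (consistent with the $p=2$ non-existence already used elsewhere in the paper).

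Next I would treat the case $G'\not\subseteq Z(G)$, so $|Z(G)\cap G'|=p$. Pick a central subgroup $K\leq Z(G)$ of order $p$. There are two sub-cases: $K\subseteq G'$ and $K\cap G'=1$ (this is possible since $|Z(G)|\geq p^2$ and $Z(G)\cap G'$ has order $p$). If $K\subseteq G'$, then $K=Z(G)\cap G'$ and Lemma \ref{BH} applies: $G/K$ is one of $ES(p^3)\times\mathbb{Z}_p^{(n-4)}$, $E(2)\times\mathbb{Z}_p^{(\cdot)}$, $ES(p^{2m+1})\times\mathbb{Z}_p^{(\cdot)}$, $D_8\times\mathbb{Z}_2^{(\cdot)}$, or $Q_8\times\mathbb{Z}_2^{(\cdot)}$. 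For each of these I would pull back: knowing $G/K$ and that $K$ is central of order $p$ with $K\subseteq G'$, reconstruct the possible $G$, then compute $|M(G)|$ using Theorem \ref{J} together with the known $|M(G/K)|$ and the exponent bound from Lemma \ref{EXP} (which forces $Z(G)$ to have exponent $p$). The inequality from Theorem \ref{J} is already tight to within a factor $p^{n-3}$, so only the configurations where $G'\cap K$ and $(G/K)^{ab}\otimes K$ behave optimally can give $t(G)=\log_p|G|+1$, and these should be exactly the class-$2$ groups found above together with $\Phi_{15}(1^6)$-type groups; in particular $D_8,Q_8$ cases should be eliminated (consistent with ``$p$ always odd''). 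If instead $K\cap G'=1$, then Lemma \ref{T} forces $G/K\cong\mathbb{Z}_p^{(4)}\rtimes\mathbb{Z}_p$ or $(\mathbb{Z}_p^{(4)}\rtimes\mathbb{Z}_p)\times\mathbb{Z}_p$ with $p$ odd; reconstructing the central extension $1\to K\to G\to G/K\to 1$ with $K\cap G'=1$ shows $G\cong (G/K)\times K$ up to the relevant identifications, i.e.\ $G\cong(\mathbb{Z}_p^{(4)}\rtimes\mathbb{Z}_p)\times\mathbb{Z}_p^{(2)}$ (the first option is excluded since it has $|G'|=p^2$ but $|G/K|$ would be too small, or gives the wrong multiplier).

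The small cases $n=6$ and $n=7$ need separate care. For $n=7$ the key input is Proposition \ref{K}: there is no such group with $G'=Z(G)\cong\mathbb{Z}_p\times\mathbb{Z}_p$, which together with $|Z(G)|\geq p^2$ and $|G'|=p^2$ forces either $G'\subsetneq Z(G)$ (handled by the class-$2$ analysis, giving only the $n=7$ member of the $(\mathbb{Z}_p^{(4)}\rtimes\mathbb{Z}_p)\times\mathbb{Z}_p^{(2)}$ family when it exists) or $|Z(G)\cap G'|=p$ with $|Z(G)|\geq p^2$ (handled by Lemmas \ref{BH}, \ref{T}). For $n=6$ I would invoke James's classification \cite{RJ} of the isoclinism families $\Phi_k$ with $|G'|=p^2$, $|Z(G)|\geq p^2$, and check each surviving family by Theorem \ref{BEE} (class $2$) or Theorem \ref{RM} (class $3$), ruling out all but $\Phi_{12}(1^6),\Phi_{13}(1^6),\Phi_{15}(1^6)$; the $p=2$ exclusions at order $2^6$ come from HAP/GAP computation as elsewhere in the paper.

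\medskip

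The main obstacle I expect is the tightness of the bookkeeping in the class-$2$ case: Theorem \ref{BEE} only gives $|M(G)|$ in terms of $|X|=|X_1+X_2|$, and computing $|X_1+X_2|$ exactly (rather than a lower bound) requires understanding the intersection $X_1\cap X_2$ and the full image of $\psi_2$ for each admissible bilinear form $V\times V\to W$. Classifying alternating bilinear maps $V\times V\to GF(p)^2$ up to equivalence — which is where the families $\Phi_{11}$ through $\Phi_{15}$ and the residue parameter $g$ come from — and then, for each, pinning $\dim X$ on the nose, is the delicate heart of the argument; everything else is an application of Theorems \ref{J}, \ref{RM} and the already-proved Lemmas \ref{BH} and \ref{T}.
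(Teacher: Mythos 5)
Your skeleton touches the right lemmas, but the decisive quantitative step is missing, and it is exactly the step your plan defers to ``counting $|X|$ carefully.'' The paper does not classify alternating bilinear maps $V\times V\to W$ with $\dim W=2$ for arbitrary $n$ --- that is a pencil-of-forms classification you would have to carry out from scratch, and nothing in your outline reduces the problem to finitely many orders. What the paper actually does in the case $G'=Z(G)$ is feed the structure of $G/K$ supplied by Lemma \ref{BH} back into the Blackburn--Evens setup: from those quotients one reads off generators $x,y,\alpha_1,\dots,\alpha_{n-4}$ with $[x,y]$ and some $[\alpha_k,x]$ spanning the two independent directions of $G'$, and then $\psi_2(x\otimes y\otimes\alpha_i)$ together with $\psi_2(\alpha_k\otimes x\otimes\alpha_i)$ give $2n-9$ linearly independent elements of $X_1$. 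Via Theorem \ref{BEE} this yields $|M(G)|\leq p^{\frac{1}{2}n(n-3)-n+6}$, which is below the target for every $n\geq 8$, leaving only $n=6$ (James's list) and $n=7$ (Proposition \ref{K}). Your proposal never produces a bound of this shape, so the reduction to finitely many orders --- the heart of the theorem --- is not established.

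The second gap is your treatment of $G'\not\subseteq Z(G)$ via the sub-case $K=Z(G)\cap G'$. Theorem \ref{J} applied to that $K$ gives only $|M(G)|\leq |M(G/K)|\,p^{\,n-3}\leq p^{\frac{1}{2}n(n-3)+1}$ (this is precisely the computation inside Lemma \ref{BH}), which sits two powers of $p$ \emph{above} the target $p^{\frac{1}{2}n(n-3)-1}$; so ``pulling back'' the quotients listed in Lemma \ref{BH} and invoking Theorem \ref{J} eliminates nothing, and no mechanism for the claimed elimination is given. In fact that sub-case is unnecessary: whenever $G'\neq Z(G)$ and $|Z(G)|\geq p^2=|G'|$ the center cannot be contained in $G'$, and since $Z(G)$ has exponent $p$ by Lemma \ref{EXP} there is always a central $K$ of order $p$ with $K\cap G'=1$. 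Lemma \ref{T} then applies directly; moreover $(G/K)'$ central and $K\cap G'=1$ force $\gamma_3(G)\subseteq K\cap G'=1$, so $G$ has class $2$ after all and one lands on $(\mathbb{Z}_p^{(4)}\rtimes\mathbb{Z}_p)\times\mathbb{Z}_p^{(2)}$. This is how the paper disposes of everything outside $G'=Z(G)$ (including your $G'\subsetneq Z(G)$ subcase, which you instead fold into the hard Blackburn--Evens analysis), and it also shows that your predicted outcome for $G'\not\subseteq Z(G)$ --- the class-two groups $\Phi_{15}(1^6)$ etc.\ --- cannot occur there.
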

\begin{proof}
By Lemma \ref{EXP}, $Z(G)$ is of exponent $p$. We consider two cases here.\\
{\bf Case 1}:
Let $G'=Z(G) \cong \mathbb{Z}_p \times  \mathbb{Z}_p \cong \gen{z} \times \gen{w}$. 
The isomorphism type of $G/K$ is as in Theorem \ref{BH}.
It follows from these structures that there are $n-2$ generators $\{x,y,\alpha_i, i \in \{1,2,\cdots n-4\}\}$ of $G$ such that $[x,y]\in \gen{z}$ and $[\alpha_k,x] \in \gen{w}$, for some $k$. Hence $\psi_2(x \otimes y \otimes \alpha_i, i \in \{1,2,\cdots n-4\})$ and $\psi_2(\alpha_k \otimes x \otimes \alpha_i, i \in \{1,2,\cdots n-4\}, i \neq k)$ gives $(2n-9)$ linearly independent elements in $G' \otimes G/G'$.
Now by Theorem \ref{BEE}, we have $|M(G)| \leq p^{\frac{1}{2}n(n-3)-n+6}$, which is possible only when $n \leq 7$. Now it only remains to consider groups of order $p^6$ and $p^7$.

For groups of order $p^7$ result follows from Proposition \ref{K}.

Now consider groups of order $p^6$ for odd $p$. Then $G$ belongs to the isoclinism classes $\Phi_{12}, \Phi_{13}$ or $\Phi_{15}$ of \cite{RJ}. If $G$ is of exponent $p^2$, then it is easy to see that $|X| \geq p^5$ and $|M(G)| \leq p^7$ by Theorem \ref{BEE}.
Now for $\Phi_{12}(1^6),\Phi_{13}(1^6),\Phi_{15}(1^6)$ we have $|X|=|X_1|=p^4$ and using Theorem \ref{BEE} we see that all of theses groups have Schur multiplier of order $p^8$.

By HAP \cite{HAP} of GAP \cite{GAP} we see that there is no such group for $p=2$. \\
{ \bf Case 2}: Consider complement of case 1. In these cases the hypothesis of Lemma \ref{T} holds. We can choose a central subgroup $K$ of order $p$ such that $K \cap G'=1$ and $G/K \cong \mathbb{Z}_p^{(4)} \rtimes \mathbb{Z}_p$ or $(\mathbb{Z}_p^{(4)} \rtimes \mathbb{Z}_p) \times \mathbb{Z}_p$. Here $|Z(G)/K|=|Z(G/K)|$. 
Also note that $G$ is of exponent $p$. Then it follows easily that $G \cong (\mathbb{Z}_p^{(4)} \rtimes \mathbb{Z}_p) \times \mathbb{Z}_p^{(2)}$ $(p \neq 2)$.

\hfill$\Box$

\end{proof}
Finally we consider groups $G$ such that $|G'|=p^3$.
\begin{lemma}\label{RM1}
Let $G$ be a non-abelian $p$-group of order $p^n$ with $t(G)=\log_p(|G|)+1$ and $|G'|=p^3$. Then for any subgroup $K \subseteq Z(G) \cap G'$ of order $p$, $G/K \cong \mathbb{Z}_p^{(4)} \rtimes \mathbb{Z}_p$ $(p \neq 2)$. In particular, $|G|=p^6$.\\
\end{lemma}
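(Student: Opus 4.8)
The plan is to transfer the problem to the quotient $A:=G/K$, which has $|A'|=p^2$, and then match $A$ against the classification of $p$-groups of small generalized corank with derived subgroup of order $p^2$ that has already been used in Lemma~\ref{T}. First I would clear away small orders: as $G$ is non-abelian with $|G'|=p^3$, the lemma on groups of order at most $p^5$ forces $n\ge 6$, since none of the groups listed there has derived subgroup of order $p^3$; then Lemma~\ref{EA} applies and $G^{ab}=G/G'$ is elementary abelian of order $p^{n-3}$. I would also record that, from $t(G)=\log_p(|G|)+1=n+1$ and Green's formula, $|M(G)|=p^{\frac12 n(n-1)-(n+1)}=p^{\frac12 n(n-3)-1}$.

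Next, fix $K\le Z(G)\cap G'$ of order $p$ (this exists because $G'$ is a nontrivial normal subgroup of the $p$-group $G$) and set $A=G/K$. Since $K\subseteq G'$ we have $A'=G'/K$, so $|A'|=p^2$, and $A^{ab}\cong G/G'$ is elementary abelian of order $p^{n-3}$; hence $|A^{ab}\otimes K|=p^{n-3}$, while $|M(K)|=1$ and $|G'\cap K|=|K|=p$. Applying Theorem~\ref{J} to the central subgroup $K$ gives $|M(G)|\cdot p\mid |M(A)|\cdot p^{n-3}$, so
\centerline{$|M(A)|\ \ge\ p^{-(n-4)}|M(G)|\ =\ p^{\frac12 n(n-3)-1-(n-4)}\ =\ p^{\frac12(n-1)(n-4)+1}.$}
On the other hand $A$ is non-abelian of order $p^{n-1}$, so \cite[Main Theorem]{PN} gives $|M(A)|\le p^{\frac12(n-2)(n-3)+1}=p^{\frac12(n-1)(n-4)+2}$. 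Thus the generalized corank of $A$ satisfies $s(A)\in\{0,1\}$ — it is the extra factor $|G'\cap K|=p$ in Theorem~\ref{J}, absent in the situation of Lemma~\ref{T}, that sharpens the available bound from $s(A)\le 2$ to $s(A)\le 1$ here.

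To finish, I would appeal to the classification of non-abelian $p$-groups of generalized corank $0$ and $1$ in \cite[Theorem~21, Corollary~23]{PN2}. No group of corank $0$ has derived subgroup of order $p^2$, and among the groups of corank $1$ the only one with $|A'|=p^2$ is $\mathbb{Z}_p^{(4)}\rtimes\mathbb{Z}_p$, of order $p^5$ and with $p$ odd (the group $(\mathbb{Z}_p^{(4)}\rtimes\mathbb{Z}_p)\times\mathbb{Z}_p$, which also has derived subgroup of order $p^2$, has corank $2$ and is excluded by $s(A)\le 1$). Therefore $A=G/K\cong\mathbb{Z}_p^{(4)}\rtimes\mathbb{Z}_p$ with $p$ odd, and comparing orders gives $p^{n-1}=p^5$, i.e.\ $|G|=p^6$.

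The computations involved — the index bookkeeping in Theorem~\ref{J} and the exponent arithmetic — are routine, and the only substantive ingredient is the classification \cite{PN2} of $p$-groups of generalized corank at most $1$, already in use in the paper; so I do not expect a real obstacle. The point to be careful about is that the improvement from $s(A)\le 2$ to $s(A)\le 1$, which is precisely what rules out the spurious quotient $(\mathbb{Z}_p^{(4)}\rtimes\mathbb{Z}_p)\times\mathbb{Z}_p$ and hence pins $|G|$ down to $p^6$, genuinely relies on $K\subseteq G'$.
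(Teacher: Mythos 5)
Your proposal is correct and follows essentially the same route as the paper: pass to $A=G/K$, use Theorem \ref{J} together with $|G'\cap K|=p$ and the elementary abelianness of $G/G'$ (Lemma \ref{EA}) to bound $|M(A)|$ from below, compare with Niroomand's bound, and invoke the classification in \cite{PN2} to identify $A\cong\mathbb{Z}_p^{(4)}\rtimes\mathbb{Z}_p$ with $p$ odd, whence $|G|=p^6$. The only cosmetic difference is that the paper applies the refined form of Niroomand's bound for $|A'|=p^2$ (giving equality and hence a single citation of \cite[Theorem~21]{PN2}), whereas you use the general bound and then filter the $s(A)\le 1$ groups by derived subgroup order, which amounts to the same thing.
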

\begin{proof}
For $p$ odd, by Theorem \ref{J} we have $|M(G)| p \leq |M(G/K)||G/G' \otimes K|$. Since $G/G'$ is elementary abelian by Lemma \ref{EA}, we have $|M(G)| \leq |M(G/K)|p^{(n-4)}$ and by \cite[Main Theorem]{PN}, $|M(G/K)| \leq p^{\frac{1}{2}(n-1)(n-4)+1}$. Hence $|M(G)| \leq p^{\frac{1}{2}n(n-3)-1}$. Using \cite[Theorem 21]{PN2}, we get $G/K \cong \mathbb{Z}_p^{(4)} \rtimes \mathbb{Z}_p$.
For $p=2$, $|M(G)| < p^{\frac{1}{2}n(n-3)-1}$, which is not our case.
\hfill$\Box$

\end{proof}
\begin{lemma}
There is no non-abelian $p$-group $G$ with $|G'|=p^3, |Z(G)|=p$ and $t(G)=\log_p(|G|)+1$.
\end{lemma}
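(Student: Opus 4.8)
The plan is to combine the structural constraint from Lemma~\ref{RM1} with a counting argument based on Theorem~\ref{RM}. By Lemma~\ref{RM1}, if such a group $G$ existed it would have order $p^6$ and for every central subgroup $K \subseteq Z(G) \cap G'$ of order $p$ we would have $G/K \cong \mathbb{Z}_p^{(4)} \rtimes \mathbb{Z}_p$ with $p$ odd. First I would record the basic numerology: $|G|=p^6$, $|G'|=p^3$, $|Z(G)|=p$, so $G^{ab}$ is elementary abelian of order $p^3$ by Lemma~\ref{EA}, and $G$ is a $3$-generator group. Since $Z(G)$ has order $p$ and $|G'|=p^3 > p$, the group has nilpotency class $3$: indeed $\gamma_2(G)/\gamma_3(G)$ and $\gamma_3(G)$ are both nontrivial, $\gamma_3(G) \subseteq Z(G)$ forces $|\gamma_3(G)|=p$ and $|\gamma_2(G)/\gamma_3(G)|=p^2$, and $Z(G)=\gamma_3(G)$.

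Next I would set up Theorem~\ref{RM} with $\bar{G}=G/Z(G)$, which has order $p^5$ and (since $G^{ab}$ is elementary abelian) $\bar{G}^{ab}$ elementary abelian. Write $d=\dim \bar{G}^{ab}$; since $\gamma_2(G)/\gamma_3(G)$ has order $p^2$ and $\bar{G}$ has order $p^5$, we get $\bar{G}^{ab}$ of order $p^3$, so $d=3$. Theorem~\ref{RM} then reads
\[
|M(G)|\,|\gamma_2(G)|\,|\mathrm{Image}(\psi_2)|\,|\mathrm{Image}(\psi_3)| \;\leq\; |M(\bar{G}^{ab})|\; \Big|\tfrac{\gamma_2(G)}{\gamma_3(G)} \otimes \bar{G}^{ab}\Big|\; \big|\gamma_3(G)\otimes \bar{G}^{ab}\big|.
\]
Here $|M(\bar{G}^{ab})| = p^{3}$ (Schur multiplier of $\mathbb{Z}_p^{(3)}$), $|\gamma_2(G)|=p^3$, $|\frac{\gamma_2(G)}{\gamma_3(G)} \otimes \bar{G}^{ab}| = p^{2\cdot 3}=p^6$, and $|\gamma_3(G)\otimes \bar{G}^{ab}| = p^{1\cdot 3}=p^3$. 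So the right side is $p^{12}$ and the inequality becomes $|M(G)|\cdot p^3 \cdot |\mathrm{Image}(\psi_2)|\cdot|\mathrm{Image}(\psi_3)| \leq p^{12}$. Since we are assuming $t(G)=\log_p|G|+1 = 7$, i.e.\ $|M(G)| = p^{\frac{1}{2}\cdot 6 \cdot 3 - 1} = p^{8}$, this forces $|\mathrm{Image}(\psi_2)|\cdot|\mathrm{Image}(\psi_3)| \leq p$, hence $|\mathrm{Image}(\psi_2)| \leq p$ and $|\mathrm{Image}(\psi_3)|=1$ (or vice versa). The main work is then to derive a contradiction from the claim that $\psi_2$ and $\psi_3$ are forced to have larger image for any class-$3$ group with these invariants.

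The heart of the argument — and the step I expect to be the main obstacle — is bounding $|\mathrm{Image}(\psi_2)|$ and $|\mathrm{Image}(\psi_3)|$ from below. For $\psi_2$: pick generators $x_1,x_2,x_3$ of $G$ whose images span $\bar{G}^{ab}$. The map $\psi_2(\bar x_i \otimes \bar x_j \otimes \bar x_k)$ hits $[x_i,x_j]_\gamma \otimes \bar x_k + \text{(cyclic permutations)}$ in $\frac{\gamma_2(G)}{\gamma_3(G)} \otimes \bar{G}^{ab}$. Since $\frac{\gamma_2(G)}{\gamma_3(G)}$ has dimension $2$ and is generated by the classes of the $[x_i,x_j]$, one shows (mimicking the analysis in Proposition~\ref{K} and the $|G'|=p^2$, $|Z(G)|=p$ proposition above, via an explicit basis of commutators) that $|\mathrm{Image}(\psi_2)| \geq p^2$ — for a $3$-generator class-$3$ group one typically gets at least $\delta - 2 = 1$ trivially, but the extra structure (two independent commutator classes in $\gamma_2/\gamma_3$) pushes this to $p^2$, already contradicting $|\mathrm{Image}(\psi_2)| \leq p$. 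If a borderline configuration survives this (some special commutator relations, possibly forcing $p=3$), I would handle it exactly as in the earlier propositions: reduce to a finite check of groups of order $3^6$ (and $p^6$ for the generic odd $p$ via James's classification $\Phi_k$ in \cite{RJ}), and dispatch $p=2$ by the HAP \cite{HAP} computation in GAP \cite{GAP}. The delicate point is making the lower bound on $\mathrm{Image}(\psi_2)$ robust across all possible commutator structures compatible with $|G'|=p^3$, $|Z(G)|=p$, $|G^{ab}|=p^3$; once that is in hand the numerical inequality closes immediately.
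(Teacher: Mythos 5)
Your setup is the same as the paper's (reduce to $|G|=p^6$ via Lemma~\ref{RM1}, then apply Theorem~\ref{RM}), and your numerology is right: the right-hand side of the inequality is $p^{12}$, $|\gamma_2(G)|=p^3$, and $|M(G)|=p^8$ would force $|\mathrm{Image}(\psi_2)|\cdot|\mathrm{Image}(\psi_3)|\leq p$. But the step you flag as the heart of the argument is where it breaks: the claim $|\mathrm{Image}(\psi_2)|\geq p^2$ is false, and not just hard to prove. Here $\bar{G}^{ab}$ has rank $3$, and $\psi_2$ vanishes on any basis triple with a repeated index (e.g.\ $\psi_2(\bar x_1\otimes\bar x_2\otimes\bar x_1)=[x_1,x_2]_\gamma\otimes\bar x_1+[x_2,x_1]_\gamma\otimes\bar x_1=0$), while the six triples on distinct indices all give $\pm\psi_2(\bar x_1\otimes\bar x_2\otimes\bar x_3)$ by the cyclic symmetry and antisymmetry of the formula. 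So $\mathrm{Image}(\psi_2)$ is generated by a single element and has order at most $p$, no matter what the commutator structure is. Your inequality then only yields $|M(G)|\leq p^8$ when $\mathrm{Image}(\psi_3)$ is trivial, which is not a contradiction, and the "borderline configuration" you hope to exclude by a finite check is in fact the only configuration.

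The repair is to get the second factor of $p$ from $\psi_3$ rather than from $\psi_2$, which is what the paper does: from $G/Z(G)\cong\mathbb{Z}_p^{(4)}\rtimes\mathbb{Z}_p=\Phi_4(1^5)$ and James's tables one sees $G$ lies in one of the isoclinism classes $\Phi_{31},\Phi_{32},\Phi_{33}$, and for each of these one checks directly that $|\mathrm{Image}(\psi_2)|\geq p$ \emph{and} $|\mathrm{Image}(\psi_3)|\geq p$; Theorem~\ref{RM} then gives $|M(G)|\leq p^7<p^8$. The bound $|\mathrm{Image}(\psi_3)|\geq p$ does require an argument (note for instance that $\psi_3(\bar x_i\otimes\bar x_j\otimes\bar x_k\otimes\bar x_k)=0$ identically, so one must exhibit a genuinely nonzero value), which is why the paper verifies it on the explicit isoclinism classes rather than in the abstract.
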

\begin{proof}
By the  preceeding Lemma we have $G/Z(G) \cong \mathbb{Z}_p^{(4)} \rtimes \mathbb{Z}_p \cong \Phi_4(1^5)$ and $|G|=p^6$. From \cite{RJ} we can see that $G$ belongs to one of the isoclinism classes $\Phi_{31}, \Phi_{32}, \Phi_{33}$. 
Observe that for these groups $|$Image$(\psi_2)|\geq p$ and $|$Image$(\psi_3)|\geq p$.
Now using Theorem \ref{RM} we get $|M(G)| \leq p^7$, which is not our case.
\hfill$\Box$

\end{proof}
The following theorem now completes the proof of the Main Theorem.
\begin{thm}
Let $G$ be a non-abelian $p$-group of order $p^n$ with $|G'|=p^3$ and $t(G)=\log_p(|G|)+1$. Then $G \cong \Phi_{11}(1^6)$.
\end{thm}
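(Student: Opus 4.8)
The plan is to combine the structural constraints already established with the classification of $p$-groups of order $p^6$. By Lemma \ref{RM1}, any central subgroup $K \subseteq Z(G) \cap G'$ of order $p$ gives $G/K \cong \mathbb{Z}_p^{(4)} \rtimes \mathbb{Z}_p$ and forces $|G| = p^6$; in particular $p$ is odd. Since $|G'| = p^3$ and $|G| = p^6$, and since $Z(G) \cap G' \neq 1$ (as $G' \cap Z(G) \neq 1$ for any nilpotent group), such a $K$ exists. By the preceding lemma we may also assume $|Z(G)| \geq p^2$. First I would pin down the nilpotency class: since $G/K$ has class $2$ and $|G'| = p^3 > p^2 \geq |(G/K)'|$, the subgroup $K$ must lie in $\gamma_3(G)$, so $G$ has class exactly $3$ with $|\gamma_3(G)| = p$ (it cannot be larger, else $G/K$ would still have class $3$).

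Next I would invoke James's classification \cite{RJ} of groups of order $p^6$: the non-abelian groups of order $p^6$ with $|G'| = p^3$, class $3$, $|\gamma_3(G)| = p$, and $G/G'$ elementary abelian (forced by Lemma \ref{EA}) and $G/Z(G) \cong \Phi_4(1^5)$ form a short list of isoclinism families $\Phi_k$. For each candidate family I would then either compute $|M(G)|$ directly or bound it using Theorem \ref{RM}. The bilinear commutator map and the maps $\psi_2, \psi_3$ are determined by the isoclinism type, so for each family I can read off lower bounds on $|\mathrm{Image}(\psi_2)|$ and $|\mathrm{Image}(\psi_3)|$: for most families these bounds will be large enough that Theorem \ref{RM} yields $|M(G)| < p^{\frac{1}{2}n(n-3)-1} = p^8$, eliminating them. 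The family $\Phi_{11}$ should be the one where the bounds are just tight enough to allow $|M(G)| = p^8$, and there I would verify equality by an explicit computation (or cite a known value), confirming $t(G) = \log_p |G| + 1$. I should also separately dispose of exponent-$p^2$ representatives within any surviving family, and handle $p = 3$ and small-exponent subtleties, possibly with a GAP/HAP check as done elsewhere in the paper.

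The main obstacle I anticipate is the case analysis over the isoclinism families of order $p^6$ with $|G'| = p^3$: James's list in this regime contains several families ($\Phi_{11}$ through roughly $\Phi_{21}$, depending on exponent and the precise structure of $\gamma_2/\gamma_3$), and Theorem \ref{RM} does not automatically kill all of them with crude bounds — some will require a careful choice of generators to exhibit enough linearly independent elements in $\mathrm{Image}(\psi_2)$ or $\mathrm{Image}(\psi_3)$, and a few may need the sharper input that $\psi_3$ is nontrivial because $\gamma_3(G) \not\leq$ the annihilator of $\bar G^{ab}$ under the relevant pairing. The genuinely delicate point is confirming that $\Phi_{11}(1^6)$ really does achieve $|M(G)| = p^8$ and is not eliminated — this requires the exact value of the Schur multiplier of $\Phi_{11}(1^6)$, which I would obtain by the Blackburn–Evens-type analysis is not directly available since $G$ has class $3$, so instead via Theorem \ref{RM} showing the inequality is an equality, or by direct reference to tables of Schur multipliers of groups of order $p^6$.
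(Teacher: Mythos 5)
There is a genuine error at the start of your argument, and it is fatal: from $\gamma_3(G/K)=1$ you can only conclude $\gamma_3(G)\leq K$, i.e.\ $|\gamma_3(G)|\leq p$ — \emph{not} that $K\leq\gamma_3(G)$ and hence not that $G$ has class exactly $3$. The inference runs in the wrong direction, and the group you are trying to isolate, $\Phi_{11}(1^6)$, has nilpotency class $2$ (its derived subgroup $\langle\beta_1,\beta_2,\beta_3\rangle$ is central), so your "class exactly $3$" conclusion would eliminate the correct answer at the outset. This misstep also propagates to your final paragraph, where you worry that Blackburn--Evens is unavailable "since $G$ has class $3$": in fact the paper computes $|M(\Phi_{11}(1^6))|=p^8$ precisely by the Blackburn--Evens method (Theorem \ref{BEE}), which applies because $\Phi_{11}(1^6)$ is of class $2$ with elementary abelian $G/G'$ and has $|X|=|X_1|=p$.

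Beyond that, two steps you leave open are handled concretely in the paper and should not be waved at. First, the paper begins by proving $Z(G)\subseteq G'$ (via Theorem \ref{J} applied to a central $K$ with $K\cap G'=1$, giving $|M(G)|\leq p^{\frac{1}{2}n(n-3)-3}$, a contradiction); combined with the preceding lemma this gives $|Z(G)\cap G'|=|Z(G)|\geq p^2$. Second, the exponent reduction is not a "separate disposal of exponent-$p^2$ representatives": choosing two distinct central subgroups $K_1,K_2$ of order $p$ in $G'$, Lemma \ref{RM1} gives $G/K_i\cong\mathbb{Z}_p^{(4)}\rtimes\mathbb{Z}_p$, which has exponent $p$, and since $G$ embeds in $G/K_1\times G/K_2$ it follows that $G$ itself has exponent $p$. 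Only then does James's list reduce to the families $\Phi_6,\Phi_9,\Phi_{10},\Phi_{11},\Phi_{16},\dots,\Phi_{21}$ in $(1^6)$, which are eliminated (except $\Phi_{11}$) by Karpilovsky's bound or by Theorem \ref{RM} with $|\mathrm{Image}(\psi_2)|,|\mathrm{Image}(\psi_3)|\geq p$. Your overall strategy (quotient structure from Lemma \ref{RM1}, then a case analysis over James's families using Theorem \ref{RM}) is the right one, but the class computation must be corrected and the exponent and $Z(G)\subseteq G'$ steps supplied before the case analysis is actually finite and correct.
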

\begin{proof}
We claim that $Z(G) \subseteq G'$.
Assume that $Z(G) \nsubseteq G'$. Then there is a central subgroup $K$ of order $p$ such that $G' \cap K=1$. Now by Theorem \ref{J} and \cite[Main Theorem]{PN} we have

 \centerline{$|M(G)| \leq |M(G/K)||G/G'K| \leq p^{\frac{1}{2}n(n-5)+1+(n-4)}=p^{\frac{1}{2}n(n-3)-3}$,}
which is a contradiction. Hence $Z(G) \subseteq G'$.

Note that $|Z(G)| \geq p^2$ by preceeding lemma. We can now choose two distinct central subgroups $K_i$ $(i=1,2)$ of order $p$. Then by Lemma \ref{RM1} we have $G/K_i \cong \mathbb{Z}_p^{(4)} \rtimes \mathbb{Z}_p$ $(i=1,2)$, which are of exponent $p$. So $G$ is of exponent $p$. Hence from \cite{RJ} it follows that $G \cong \Phi_6(1^6), \Phi_9(1^6), \Phi_{10}(1^6), \Phi_{11}(1^6), \Phi_{16}(1^6), \Phi_{17}(1^6), \Phi_{18}(1^6),$ $ \Phi_{19}(1^6),\Phi_{20}(1^6), \Phi_{21}(1^6)$ groups are of order $p^6$ of exponent $p$ with $|Z(G)| \geq p^2, |G'|=p^3$.

First consider groups $G \cong \Phi_6(1^6), \Phi_9(1^6), \Phi_{10}(1^6)$.  Then by a routine check we can show that $|M(G)| \leq p^6$ using \cite[Theorem 2.2.10]{GK}.

Now consider the group $G =\Phi_{11}(1^6)$. Then $G$ is of class two with $G/G'$ elementary abelian. Hence by Theorem \ref{BEE} it follows that $\Phi_{11}(1^6)$ has Schur multiplier of order $p^8$ as $|X|=|X_1|=p$.

For other groups $G$, observe that $|$Image$(\psi_2)|\geq p$, $|$Image$(\psi_3)|\geq p$ and hence it follows from Theorem \ref{RM} that $|M(G)| \leq p^7$.
\hfill$\Box$

\end{proof}
{\bf Acknowledgement}:
I am grateful to my supervisor Manoj K. Yadav for his guidance, motivation and discussions. I wish to thank the Harish-Chandra Research Institute, the Dept. of
Atomic Energy, Govt. of India, for providing excellent research facility.

\end{document}